\newtheorem{theorem}{Theorem}
\newtheorem{lemma}[theorem]{Lemma}
\newtheorem{corollary}[theorem]{Corollary}
\newtheorem{proposition}[theorem]{Proposition}
\newtheorem{remark}[theorem]{Remark}
\newcommand{\err}{\operatorname{err}}
\def\eps{\varepsilon}
\newcommand{\ip}[2]{(#1\hspace*{.5mm},#2)}
\newcommand{\dual}[2]{\langle#1\hspace*{.5mm},#2\rangle}
\newcommand{\norm}[3][]{#1\|#2#1\|_{#3}}
\newcommand{\snorm}[2]{|#1|_{#2}}
\newcommand{\diam}{\mathrm{diam}}
\def\div{{\rm div\,}}
\newcommand{\Hdivset}[1]{\boldsymbol{H}(\div;#1)}
\newcommand{\set}[2]{\big\{#1\,:\,#2\big\}}
\newcommand{\pwnabla}{\nabla_\TT}
\newcommand{\pwdiv}{\div_\TT}
\newcommand{\RT}{\ensuremath{\mathcal{RT}}}
\newcommand{\R}{\ensuremath{\mathbb{R}}}
\newcommand{\N}{\ensuremath{\mathbb{N}}}
\newcommand{\vv}{\ensuremath{\boldsymbol{v}}}
\newcommand{\ww}{\ensuremath{\boldsymbol{w}}}
\newcommand{\TT}{\ensuremath{\mathcal{T}}}
\newcommand{\MM}{\ensuremath{\mathcal{M}}}
\newcommand{\cS}{\ensuremath{\mathcal{S}}}
\newcommand{\el}{\ensuremath{T}}
\newcommand{\PP}{\ensuremath{\mathcal{P}}}
\newcommand{\OO}{\ensuremath{\mathcal{O}}}
\newcommand{\normal}{\ensuremath{{\boldsymbol{n}}}}
\newcommand{\ssigma}{{\boldsymbol\sigma}}
\newcommand{\ttau}{{\boldsymbol\tau}}
\newcommand{\llambda}{{\boldsymbol\lambda}}
\newcommand{\cchi}{{\boldsymbol\chi}}
\newcommand{\qq}{{\boldsymbol{q}}}
\newcommand{\uu}{\boldsymbol{u}}
\newcommand{\eeps}{\boldsymbol{\varepsilon}}
\newcounter{constantsnumber}
\def\setc#1{
  \ifthenelse{\equal{#1}{poinc}}{C_{\rm edge}}{ 
   \refstepcounter{constantsnumber}
   \label{const#1}C_{\theconstantsnumber}}}
\def\c#1{
  \ifthenelse{\equal{#1}{poinc}}{C_{\rm edge}}{ 
    C_{\ref{const#1}}}}
\begin{document}

\title[Superconvergent DPG methods]{Superconvergence in a DPG method for an ultra-weak formulation}
\date{\today}

\author{Thomas F\"{u}hrer}
\address{Facultad de Matem\'{a}ticas, Pontificia Universidad Cat\'{o}lica de Chile, Santiago, Chile}
\email{tofuhrer@mat.uc.cl}

\thanks{{\bf Acknowledgment.} 
This work was 
supported by FONDECYT project 11170050 and FWF project \emph{Optimal adaptivity for BEM and
FEM-BEM coupling} under grant P27005. Special thanks go to Norbert Heuer (Pontifica Universidad Cat\'olica de Chile) 
for fruitful discussions on the topic of this research.}

\keywords{DPG method, ultra-weak formulation, a priori analysis, 
duality arguments, postprocessing solutions, superconvergence}
\subjclass[2010]{65N30, 
                 65N12} 
\begin{abstract}
  In this work we study a DPG method for an ultra-weak variational formulation of a reaction-diffusion problem.
  We improve existing a priori convergence results by sharpening an approximation result for the numerical flux.
  By duality arguments we show that higher convergence rates for the scalar
  field variable are obtained if the polynomial order of the corresponding approximation space is increased by one.
  Furthermore, we introduce a simple elementwise postprocessing of the solution and prove superconvergence.
  Numerical experiments indicate that the obtained results are valid beyond the underlying model problem.
\end{abstract}
\maketitle

\section{Introduction}
The discontinuous Petrov Galerkin method with optimal test functions (DPG method), introduced in the present form by
Demkowicz \& Gopalakrishnan in a series of papers~\cite{partI,partII} and~\cite{partIII,partIV},
has gained much attention due to its inherent stability properties.
This makes the method particularly interesting for singularly perturbed problems~\cite{BroersenStevenson_2014,DemkowiczHeuer_2013,HeuerK_2017}, where
the aim is to provide robust error bounds.
Other features of the DPG methods are, for instance, its equivalence to a minimal residual method or its ``built-in''
error estimator, for the latter see~\cite{DPGaposteriori}.

DPG methods use broken test spaces, which allow for an efficient calculation of (nearly) optimal test functions and
the feasible evaluation of the residual in the dual norm of the test space.
Different variational formulations can be used within the DPG framework. Very popular
is the ultra-weak formulation, see~\cite{DemkowiczG_11_ADM} for the analysis in the case of a Poisson model problem.
For second order elliptic problems, ultra-weak formulations are derived by recasting the PDE into a first-order system,
multiplying with test functions, and then integrating by parts elementwise.
This requires the introduction of new trace unknowns that only live on the skeleton of the mesh. These variables
represent the trace resp. normal flux of the solution on element boundaries.
Another DPG method based on a variational formulation without recasting the PDE into a first-order system is introduced
and analyzed for the Poisson model problem in~\cite{DemkowiczG_13_PDM}.

The purpose of this note is to study convergence rates for DPG methods using the ultra-weak formulation.
One particular drawback of ultra-weak formulations is that $u$ and its gradient $\ssigma = \nabla u$ 
are usually approximated simultaneously with the same order.
Therefore, we are interested if higher convergence rates for the scalar field variable $u$ are possible, either by
increasing the polynomial order of the corresponding approximation space or by defining an approximation of the scalar
field variable by a suitable postprocessing.
The latter issue has been raised and addressed for other numerical schemes too, e.g., the HDG method~\cite{HDG}.
In the context of DG methods,
the achievement of higher convergence rates by postprocessing the solution has been studied
thoroughly and is called superconvergence, see, 
e.g.~\cite{CockburnDG_superconvergence,Cockburn_GS_projection,Cockburn_GW_superconvergence}.

Let us also note that convergence rates for primal DPG methods were studied and analyzed in~\cite{BoumaGH_DPGconvRates}. 
There, duality arguments have been introduced and used to prove higher convergence rates of the field variable in weaker
norms.
We point out that by using an ultra-weak formulation, we only have weak norms at hand and thus the ideas
from~\cite{BoumaGH_DPGconvRates} can not directly be transferred to our situation.
In particular, higher convergence rates for ultra-weak formulations can only be obtained if we augment the approximation
space of the scalar field variable or by a postprocessing of the solution.

\subsection{Overview and main contributions of this work}
The remainder of this note and the main results are given as follows:
\begin{itemize}
  \item In Section~\ref{sec:dpg} we introduce the model problem together with the DPG method based on the
    ultra-weak variational formulation. We also recall some important results from the literature that are used in the
    proofs that follow.
  \item Section~\ref{sec:apriori} deals with a priori convergence results for the overall error. We improve the existing
    analysis in the sense that only minimal regularity of the solution $u$ is required to obtain optimal convergence rates.
  \item In Section~\ref{sec:an} we augment the trial space for approximations $u_h\approx u$, i.e., we 
    seek $u_h$ in a polynomial space of order $p+1$ and $\ssigma_h\approx \ssigma=\nabla u$ in a polynomial space of
    order $p$. We develop duality arguments and prove in Theorem~\ref{thm:an} that the error $u-u_h$ in the
    $L^2(\Omega)$ norm converges at a higher rate than the overall error (which also includes the error for the
    skeleton unknowns).
    The practical relevance of augmenting the trial space is that existing finite element codes for DPG methods
    can be used with minor modification.
  \item In Section~\ref{sec:postproc} we seek approximations $u_h\approx u$ and $\ssigma_h\approx \ssigma$ in polynomial
    approximation spaces of the same order $p$. Then, we introduce the approximation $\widetilde u_h$ by postprocessing
    the solution components $u_h,\ssigma_h$. To be more precise, $\widetilde u_h$ is elementwise given as the solution
    of a discrete Neumann problem.
    In Theorem~\ref{thm:postproc} we prove superconvergence, i.e., the error $u-\widetilde u_h$ in the $L^2(\Omega)$
    norm converges at a higher rate than the overall error.
  \item Finally, numerical experiments are presented in Section~\ref{sec:ex}. We consider one problem in a convex domain
    and one problem in a nonconvex domain.
    For the latter, reduced convergence rates are predicted by our analysis and also observed in the example.
\end{itemize}

\subsection{Notational convention}
In the remainder of this work we will write $A\lesssim B$ resp. $B\lesssim A$
if there exists a constant $C>0$ that is independent of the maximal
mesh-size $h$ such that $A\leq C B$ resp. $B\leq C A$. 
Moreover, $A\simeq B$ means that $A\lesssim B$ and $B\lesssim A$.
This notation will be heavily used in the proofs, whereas in all statements of
the following theorems and corollaries we explicitly point out the dependence of $C>0$
on relevant quantities.

\section{DPG with ultra-weak formulation}\label{sec:dpg}

\subsection{Model problem}\label{sec:model}
Let $\Omega\subseteq \R^2$
be a bounded simply connected Lipschitz domain with polygonal boundary $\Gamma = \partial\Omega$.
We consider the reaction diffusion problem
\begin{subequations}\label{eq:model}
\begin{alignat}{2}
  -\Delta u + u &= f &\quad&\text{in }\Omega, \\
  u &=0 &\quad&\text{on }\Gamma.
\end{alignat}
\end{subequations}
It is well known that this problem admits for all $f\in L^2(\Omega)$ a unique solution
$u\in H_0^1(\Omega)$. Moreover, it is known that for some $s\in(1/2,s_\Omega]$ it holds 
that
\begin{align*}
  u\in H^{1+s}(\Omega) \quad\text{and}\quad \norm{u}{H^{1+s}(\Omega)} \leq C \norm{f}{},
\end{align*}
where $s_\Omega\in(1/2,1]$ depends on the geometry of $\Omega$. 
For a convex domain $\Omega$ it even holds that
\begin{align*}
  u\in H^2(\Omega) \quad\text{and}\quad \norm{u}{H^2(\Omega)} \leq C \norm{f}{}.
\end{align*}
Here, and throughout $H^t(\Omega)$ denotes the Sobolev space of real order $t>0$ with norm
$\norm{\cdot}{H^t(\Omega)}$ and seminorm $\snorm\cdot{H^t(\Omega)}$.

\subsection{Notation}
With $\norm\cdot{}$ resp. $\ip\cdot\cdot$ we denote the canonical norm resp. scalar product on $L^2(\Omega)$.
Let $\TT$ denote a mesh on $\Omega$ consisting of triangles,
i.e., $\bigcup_{\el\in\TT}\overline\el = \overline\Omega$. Throughout, we assume that the mesh $\TT$ is shape-regular
and set $h := \max_{\el\in\TT} \diam(\el)$.
With $\norm\cdot{\el}$ resp. $\ip\cdot\cdot_\el$ we denote the norm resp. scalar product on $L^2(\el)$ for
$\el\in\TT$.

We use the (broken) spaces 
\begin{align*}
  H^1(\TT) &:= \{ w\in L^2(\Omega) \,:\, w|_T \in H^1(T) \,\text{for all } T\in\TT\}, \\
  \Hdivset\TT &:= \{ \qq\in (L^2(\Omega))^2 \,:\, \qq|_T \in \Hdivset{T} \,\text{for all } T\in\TT\},
\end{align*}
where $\Hdivset\el := \set{\qq\in L^2(\el)^2}{\div\qq \in L^2(\el)}$.
The symbols $\pwnabla$ resp. $\pwdiv$ denote, the $\TT$-elementwise gradient resp. divergence.
The spaces are equipped with the canonical norms
\begin{align*}
  \norm{v}{H^1(\TT)}^2&:= \norm{\pwnabla v}{}^2 + \norm{v}{}^2, \qquad
  \norm{\ttau}{\Hdivset\TT}^2 := \norm{\pwdiv\ttau}{}^2 + \norm{\ttau}{}^2.
\end{align*}
and scalar products.

The set of all boundaries of all elements is called the skeleton
$\cS := \left\{ \partial\el \mid \el\in\TT \right\}$.
By $\normal_T$ we mean the outer normal vector on $\partial T$ for $T\in\TT$.
We use the skeleton trace spaces
\begin{align*}
  H^{1/2}(\cS) &:=
  \Big\{ \widehat u \in \Pi_{\el\in\TT}H^{1/2}(\partial\el)\,:\,
         \exists w\in H^1(\Omega) \text{ such that } 
         \widehat u|_{\partial\el} = w|_{\partial\el}\; \forall \el\in\TT \Big\},\\
  H^{-1/2}(\cS) &:=
  \Big\{ \widehat\sigma \in \Pi_{\el\in\TT}H^{-1/2}(\partial\el)\,:\,
         \exists \qq\in\Hdivset\Omega \text{ such that } 
         \widehat\sigma|_{\partial\el} = (\qq\cdot\normal_{\el})|_{\partial\el}\; \forall\el\in\TT \Big\}.
\end{align*}
Here, $H^{-1/2}(\partial\el)$ is the dual space of $H^{1/2}(\partial\el)$, where duality is understood with respect to
the $L^2(\partial\el)$ scalar product $\dual\cdot\cdot_{\partial\el}$.
We will use the canonical trace operators $\gamma_{0,\cS} : H^1(\Omega) \to H^{1/2}(\cS)$ and $\gamma_{\normal,\cS} :
\Hdivset\Omega :=\set{\ttau\in L^2(\Omega)^2}{\div\ttau\in L^2(\Omega)}
\to H^{-1/2}(\cS)$, i.e., for $u\in H^1(\Omega)$, $\ssigma\in\Hdivset\Omega$, and all $T\in\TT$ it holds that
\begin{align*}
  (\gamma_{0,\cS} u)|_{\partial T} = u|_{\partial T}, \qquad 
  (\gamma_{\normal,\cS} \ssigma)|_{\partial T} = (\ssigma\cdot\normal_T)|_{\partial T}.
\end{align*}
Therefore, $H^{1/2}(\cS) = \gamma_{0,\cS}(H^1(\Omega))$ and $H^{-1/2}(\cS) = \gamma_{\normal,\cS}(\Hdivset\Omega)$.
These spaces are equipped with the minimal energy extension norms 
\begin{align*}
  \norm{\widehat u}{1/2,\cS} &:= \inf\set{\norm{u}{H^1(\Omega)}}{u\in H^1(\Omega),\,
    \gamma_{0,\cS}u = \widehat u}, \\
    \norm{\widehat \sigma}{-1/2,\cS} &:= 
    \inf\set{\norm{\ssigma}{\Hdivset\Omega}}{\ssigma\in\Hdivset\Omega,\,\gamma_{\normal,\cS}\ssigma =\widehat\sigma}.
\end{align*}
Furthermore, set $H_0^{1/2}(\cS) := \gamma_{0,\cS}(H_0^1(\Omega))$.

\subsection{Discrete spaces and approximation operators}\label{sec:approx}
Let $\PP^p(\el)$ denote the space of polynomials on $\el\in\TT$ with degree less than or equal to $p\in\N_0$.
We use the standard polynomial approximation spaces
\begin{align*}
  \PP^p(\TT) &:= \set{v\in L^2(\Omega)}{v|_\el \in\PP^p(\el) \text{ for all }\el\in\TT}, \\
  \PP_c^p(\TT) &:= \PP^p(\TT)\cap C(\overline\Omega), \\
  \RT^p(\TT) &:= \set{\ttau\in \Hdivset\Omega}{\qq|_\el\in \RT^p(\el) \text{ for all }\el\in\TT},
\end{align*}
where $\RT^p(\el)$ is the Raviart-Thomas space of order $p\in\N_0$ on $\el\in\TT$.
For functions in the skeleton spaces $H_0^{1/2}(\cS)$ and $H^{-1/2}(\cS)$ we use the approximation spaces
\begin{align*}
  \PP_{c,0}^{p+1}(\cS) &:= \gamma_{0,\cS}(\PP_c^{p+1}(\TT)\cap H_0^1(\Omega)) \subseteq H_0^{1/2}(\cS), \\
  \PP^p(\cS) &:= \gamma_{\normal,\cS}(\RT^p(\TT)) \subseteq H^{-1/2}(\cS).
\end{align*}

We need several approximation operators. For $p\in\N_0$, let
$\Pi^p : L^2(\Omega) \to \PP^p(\TT)$ denote the $L^2$-orthogonal
projection. It is known that
\begin{subequations}\label{eq:approx}
\begin{align}
  \min_{v_h\in\PP^p(\TT)} \norm{u-v_h}{} = \norm{u-\Pi^pu}{} \leq C_p h^s \snorm{u}{H^s(\Omega)}
  \quad\text{for } s\in (0,p+1].
\end{align}
Moreover, the first-order local approximation property holds:
\begin{align}\label{eq:approx:b}
  \norm{v-\Pi^p v}{} \leq\norm{v-\Pi^0v}{} \leq C_0 h \norm{\pwnabla v}{} \quad\text{for all } v\in H^1(\TT).
\end{align}
Further, let $\Pi_\nabla^{p+1} : H^{1+s}(\Omega)\to \PP_c^{p+1}(\TT)\cap H_0^1(\Omega)$
denote an interpolation operator, such that
\begin{align}
  \norm{u-\Pi_\nabla^{p+1}u}{H^1(\Omega)} \leq C_p h^s \snorm{u}{H^{1+s}(\Omega)} 
\quad\text{for } s\in (1/2,p+1].
\end{align}
Let $\Pi_\div^p : H^{s}(\Omega)^2 \to \RT^p(\TT)$ denote the Raviart-Thomas projection. It holds that
\begin{align}
  \norm{\ssigma-\Pi_\div^p \ssigma}{} \leq C_p h^s 
  \snorm{\ssigma}{H^{s}(\Omega)} \quad\text{for } s\in(1/2,p+1].
\end{align}
\end{subequations}
The constant $C_p=C_p(s)>0$ in~\eqref{eq:approx} depends on shape-regularity of the mesh $\TT$ and 
the fixed polynomial degree $p\in\N_0$ and $s$, but not on $h$. 
Different choices for $\Pi_\nabla^{p+1}$ are available in the literature.
We stress that for some operators even $p$-explicit bounds are known. We refer the interested reader
to the discussion in~\cite[Section~4]{DemkowiczG_11_ADM} and the references therein.
In our analysis we also make use of the commutativity property 
\begin{align}\label{eq:RTcomm}
  \div\Pi_\div^p = \Pi^p\div.
\end{align}

\subsection{The ideal DPG method}\label{sec:iDPG}
Define the spaces
\begin{align*}
  U &:= L^2(\Omega)\times L^2(\Omega)^2 \times H_0^{1/2}(\cS) \times H^{-1/2}(\cS), \\
  V &:= H^1(\TT) \times \Hdivset\TT,
\end{align*}
which are equipped with the canonical norms, i.e.,
\begin{align*}
  \norm{\uu}U^2 &:= \norm{u}{}^2 + \norm{\ssigma}{}^2 + \norm{\widehat u}{1/2,\cS}^2 + 
  \norm{\widehat\sigma}{-1/2,\cS}^2, \\
  \norm{\vv}V^2 &:= \norm{v}{}^2 + \norm{\pwnabla v}{}^2 + \norm{\ttau}{}^2 + \norm{\pwdiv\ttau}{}^2
\end{align*}
for $\uu = (u,\ssigma,\widehat u,\widehat\sigma)\in U$, $\vv = (v,\ttau)\in V$.

We rewrite~\eqref{eq:model} as first-order system
\begin{align*}
  \ssigma-\nabla u = 0, \qquad -\div\ssigma+u = f \qquad\text{in }\Omega.
\end{align*}
Multiplication with test functions $\ttau\in\Hdivset\TT$, $v\in H^1(\TT)$, and 
elementwise integration by parts of the two resulting equations leads to the ultra-weak formulation. 
Define the ultra-weak bilinear form $b:U\times V \to \R$ by
\begin{align*}
  b(\uu,\vv) := \ip{u}{\pwdiv\ttau+v} + \ip{\ssigma}{\ttau+\pwnabla v}
  - \dual{\widehat u}{\ttau\cdot\normal}_{\cS} - \dual{\widehat\sigma}{v}_{\cS}
\end{align*}
for all $\uu = (u,\ssigma,\widehat u,\widehat\sigma)\in U$, $\vv=(v,\ttau)\in V$, where
\begin{align*}
  \dual{\widehat u}{\ttau\cdot\normal}_{\cS} := 
  \sum_{T\in\TT} \dual{\widehat u|_{\partial T}}{\ttau\cdot\normal_T}_{\partial T}, \quad
  \dual{\widehat\sigma}{v}_{\cS} := 
  \sum_{T\in\TT} \dual{\widehat\sigma|_{\partial T}}{v|_{\partial T}}_{\partial T}.
\end{align*}
Given the exact solution $u\in H_0^1(\Omega)$ of~\eqref{eq:model}, then $\uu = (u,\ssigma,\widehat u,\widehat\sigma) :=
(u,\nabla u,\gamma_{0,\cS} u,\gamma_{\normal,\cS}\nabla u) \in U$ satisfies
\begin{align}\label{eq:uwf}
  b(\uu,\vv) = F(\vv) \quad\text{for all }\vv = (v,\ttau)\in V,
\end{align}
where $F(\vv) := \ip{f}v$.
The bilinear form $b(\cdot,\cdot)$ is uniformly bounded with bound $\norm{b}{}$ and
satisfies the $\inf$--$\sup$ conditions with constant $C_b>0$ depending only on $\Omega$,
see~\cite{DemkowiczG_11_ADM} for the analysis of the Poisson
problem and~\cite{DPGtimestepping} for the analysis with additional reaction term.
Thus, the formulation~\eqref{eq:uwf} has a unique solution for all $f\in L^2(\Omega)$.
Furthermore, define $B : U \to V'$ by $(B\uu)(\vv) := b(\uu,\vv)$ for $\uu\in U, \vv\in V$.

To define the DPG method we need the trial-to-test operator $\Theta : U\to V$, which is
defined by the relation
\begin{align}\label{eq:ttt}
  \ip{\Theta \uu}{\vv}_V = b(\uu,\vv) \quad\text{for all }\vv\in V.
\end{align}
Here, $\ip\cdot\cdot_V$ denotes the scalar product that induces $\norm\cdot{V}$.

Let $U_h\subseteq U$ be a finite-dimensional subspace. 
The ideal DPG method then reads: Find $\uu_h\in U_{h}$ such that
\begin{align}\label{eq:iDPG}
  b(\uu_h,\Theta\ww_h) = F(\Theta\ww_h) \quad\text{for all }\ww_h\in U_{h}.
\end{align}
We recall some basic results, see, e.g.~\cite{DemkowiczG_11_ADM} for the Poisson problem and~\cite{DPGtimestepping} for
the reaction-diffusion problem.

\begin{proposition}\label{prop:equivalence}
  It holds that
  \begin{align*}
    C^{-1}\norm{\uu}U^2 \leq \norm{B\uu}{V'}^2 = b(\uu,\Theta\uu) \leq C \norm{\uu}U^2 \quad\text{for all }\uu\in U,
  \end{align*}
  where $C>0$ depends only on $\Omega$.
  \qed
\end{proposition}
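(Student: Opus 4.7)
The plan is to identify $\Theta\uu$ as the Riesz representative in $V$ of the functional $B\uu\in V'$, and then to combine that identification with the boundedness and inf--sup stability of the ultra-weak bilinear form $b(\cdot,\cdot)$ recalled in the paragraph preceding~\eqref{eq:uwf}. Once the Riesz identification is in place, both inequalities become essentially one-liners.

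First I would establish the middle equality. The defining relation~\eqref{eq:ttt} reads $\ip{\Theta\uu}{\vv}_V = b(\uu,\vv) = (B\uu)(\vv)$ for all $\vv\in V$, so the Riesz representation theorem yields $\norm{\Theta\uu}V = \norm{B\uu}{V'}$. Testing~\eqref{eq:ttt} with $\vv=\Theta\uu$ then gives
\begin{align*}
b(\uu,\Theta\uu) = \ip{\Theta\uu}{\Theta\uu}_V = \norm{\Theta\uu}V^2 = \norm{B\uu}{V'}^2,
\end{align*}
which is the equality in the proposition.

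Next I would derive the two-sided estimate. The upper bound follows at once from the uniform boundedness of $b$: for all $\vv\in V$,
\begin{align*}
|b(\uu,\vv)| \leq \norm{b}{}\,\norm{\uu}U\,\norm{\vv}V,
\end{align*}
so that $\norm{B\uu}{V'} \leq \norm{b}{}\,\norm{\uu}U$. The lower bound is precisely the inf--sup condition with constant $C_b>0$, which (by definition of the dual norm) reads
\begin{align*}
C_b\,\norm{\uu}U \leq \sup_{\vv\in V\setminus\{0\}}\frac{b(\uu,\vv)}{\norm{\vv}V} = \norm{B\uu}{V'}.
\end{align*}
Squaring both estimates and setting $C:=\max\{\norm{b}{}^2,\,C_b^{-2}\}$ yields the claimed equivalence; since $\norm{b}{}$ and $C_b$ depend only on $\Omega$, so does $C$.

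No step here is a genuine obstacle: the real content of the proposition is the inf--sup stability itself, which is imported as a black box from~\cite{DemkowiczG_11_ADM,DPGtimestepping}. The proposition is simply the formal repackaging of that stability (plus boundedness) into the residual-representation form $b(\uu,\Theta\uu)=\norm{B\uu}{V'}^2$, which is the version that will be used repeatedly in what follows.
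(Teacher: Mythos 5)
Your argument is correct and is exactly the standard one: the paper itself gives no proof of this proposition (it is recalled from \cite{DemkowiczG_11_ADM} and the reaction-diffusion analysis in \cite{DPGtimestepping}, hence the bare \qed), and your Riesz-representative identification $b(\uu,\Theta\uu)=\norm{\Theta\uu}V^2=\norm{B\uu}{V'}^2$ combined with boundedness and the inf--sup condition is precisely the argument those references supply. Nothing is missing.
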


\begin{proposition}\label{prop:iDPG}
  Let $f\in L^2(\Omega)$ and $U_h\subseteq U$ be a finite-dimensional subspace.
  Let $\uu\in U$ denote the unique solution of~\eqref{eq:uwf} and let
  $\uu_h\in U_{h}$ denote the unique solution of~\eqref{eq:iDPG}.
  Then,
  \begin{align*}
    \norm{B(\uu-\uu_h)}{V'} = \min_{\ww_h\in U_{h}} \norm{B(\uu-\ww_h)}{V'}.
  \end{align*}
  In particular, quasi-optimality
  \begin{align*}
    \norm{\uu-\uu_h}U \leq C_\mathrm{opt} \min_{\ww_h\in U_{h}} \norm{\uu-\ww_h}U
  \end{align*}
  holds, where $C_\mathrm{opt}>0$ depends only on $\Omega$.
  \qed
\end{proposition}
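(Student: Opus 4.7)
The plan is to show that the ideal DPG method~\eqref{eq:iDPG} coincides with a Galerkin projection in the Hilbert space $V$ after lifting via the trial-to-test operator $\Theta$, and then to invoke Proposition~\ref{prop:equivalence} to pass from the residual norm to the $U$-norm.

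First I would note that the defining relation~\eqref{eq:ttt} identifies $\Theta\uu \in V$ as the Riesz representative of the functional $B\uu \in V'$. In particular,
\begin{align*}
  \norm{\Theta\uu}{V} = \norm{B\uu}{V'} \qquad\text{and}\qquad b(\uu,\Theta\ww) = \ip{\Theta\uu}{\Theta\ww}_V
\end{align*}
for all $\uu,\ww \in U$. Using $F(\vv) = b(\uu,\vv)$ from~\eqref{eq:uwf}, subtracting the continuous equation from~\eqref{eq:iDPG} rewrites the discrete equation as
\begin{align*}
  \ip{\Theta(\uu-\uu_h)}{\Theta\ww_h}_V = 0 \qquad\text{for all }\ww_h \in U_h.
\end{align*}
This is precisely the Galerkin orthogonality characterizing the best approximation of $\Theta\uu$ in the subspace $\Theta(U_h) \subseteq V$. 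Consequently,
\begin{align*}
  \norm{B(\uu-\uu_h)}{V'} = \norm{\Theta(\uu-\uu_h)}{V} = \min_{\ww_h\in U_h}\norm{\Theta(\uu-\ww_h)}{V} = \min_{\ww_h\in U_h}\norm{B(\uu-\ww_h)}{V'},
\end{align*}
which is the first claimed identity.

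For quasi-optimality I would combine this identity with Proposition~\ref{prop:equivalence}: writing $C$ for the constant there, for every $\ww_h \in U_h$ we get
\begin{align*}
  C^{-1}\norm{\uu-\uu_h}{U}^2 \le \norm{B(\uu-\uu_h)}{V'}^2 \le \norm{B(\uu-\ww_h)}{V'}^2 \le C\,\norm{\uu-\ww_h}{U}^2.
\end{align*}
Taking the infimum over $\ww_h$ and setting $C_\mathrm{opt} = C$ yields the stated bound.

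There is essentially no obstacle once Proposition~\ref{prop:equivalence} is in hand. Well-posedness of~\eqref{eq:iDPG} is a byproduct: the lower bound in Proposition~\ref{prop:equivalence} shows that $\uu\mapsto\Theta\uu$ is injective on $U$ and hence on the finite-dimensional subspace $U_h$, so the symmetric positive definite system induced by $(\cdot,\cdot)\mapsto\ip{\Theta\cdot}{\Theta\cdot}_V$ on $U_h$ admits a unique solution. The only conceptual step is recognizing that~\eqref{eq:iDPG} is the normal equation of the minimal residual problem $\min_{\ww_h\in U_h}\norm{B\ww_h-F}{V'}$; everything else is linear algebra in $V$.
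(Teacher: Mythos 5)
Your argument is correct and is precisely the standard one: the paper itself omits the proof, citing \cite{DemkowiczG_11_ADM} and \cite{DPGtimestepping}, and those references establish the result exactly as you do, by identifying $\Theta$ as the Riesz lift of $B$ so that~\eqref{eq:iDPG} becomes Galerkin orthogonality for $\Theta\uu$ in $\Theta(U_h)\subseteq V$, and then converting the resulting best-approximation property of the residual into quasi-optimality via Proposition~\ref{prop:equivalence}. No gaps; the constant $C_\mathrm{opt}=C$ depends only on $\Omega$ as claimed.
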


\subsection{Practical DPG method}\label{sec:practicalDPG}
In this section we will consider the \emph{practical DPG method}. The idea is to replace the test space $V$ by a finite
dimensional space $V_r\subseteq V$.
We assume that there exists a so-called \emph{Fortin} operator $\Pi:V\to V_r$ with the following properties:
There exists $C_\Pi>0$, independent of $\TT$, such that
\begin{align}\label{eq:fortin}
  b(\uu_h,\vv-\Pi\vv) &= 0 \quad\text{and}\quad
  \norm{\Pi\vv}V \leq C_\Pi \norm{\vv}V \qquad\text{for all }\uu_h\in U_h, \vv\in V.
\end{align}
Under these assumptions it is known that the following practical DPG method admits a unique solution. 
Find $\uu_h\in U_h$ such that
\begin{align}\label{eq:practicalDPG}
  b(\uu_h,\Theta_r\ww_h) = F(\Theta_r\ww_h) \quad\text{for all }\ww_h\in U_h,
\end{align}
where the discrete trial-to-test operator $\Theta_r\ww_h \in V_r$ is defined through
\begin{align*}
  \ip{\Theta_r\ww_h}{\vv_r}_V = b(\ww_h,\vv_r) \quad\text{for all }\vv_r\in V_r.
\end{align*}

For our analysis below it is more convenient to use the mixed formulation of the practical DPG method:
Find $(\uu_h,\eeps_r)\in U_h\times V_r$ such that
\begin{subequations}\label{eq:mixedDPG}
  \begin{alignat}{4}
    &\ip{\eeps_r}{\vv_r}_V + b(\uu_h,\vv_r) &\,=\,& F(\vv_r) &\qquad&\text{for all } \vv_r\in V_r,
    \label{eq:mixedDPG:a}\\
    &b(\ww_h,\eeps_r) &\,=\,& 0 &\qquad&\text{for all } \ww_h \in U_h. \label{eq:mixedDPG:b}
  \end{alignat}
\end{subequations}
This variational formulation is actually equivalent to~\eqref{eq:practicalDPG},
see~\cite[Theorem~2.4]{BoumaGH_DPGconvRates} resp.~\cite{notesJayG} for a proof.
Define the bilinear form $a: (U\times V)\times (U\times V) \to \R$ by
\begin{align*}
  a( (\uu,\eeps),(\ww,\vv) ) := \ip{\eeps}\vv_V + b(\uu,\vv) - b(\ww,\eeps)
\end{align*}
for all $(\uu,\eeps),(\ww,\vv)\in U\times V$. 
Note that $a(\cdot,\cdot)$ is uniformly continuous with boundedness constant $\norm{a}{}$ depending only on
$\norm{b}{}$.
Let $\uu\in U$ denote the solution of~\eqref{eq:uwf} and set $\eeps :=0$. Then,
\begin{align*}
  a( (\uu,\eeps), (\ww,\vv) ) = b(\uu,\vv) = F(\vv) \quad\text{for all } (\ww,\vv) \in U\times V.
\end{align*}
Particularly, with $(\uu_h,\eeps_r)$ being the solution of~\eqref{eq:mixedDPG},
we emphasize that Galerkin orthogonality holds:
\begin{align*}
  a( (\uu-\uu_h,\eeps-\eeps_r), (\ww_h,\vv_r) ) = 0 \quad\text{for all } (\ww_h,\vv_r)\in U_h\times V_r.
\end{align*}

A similar result as in Proposition~\ref{prop:iDPG} can be derived for the practical DPG method,
see~\cite[Theorem~2.1]{practicalDPG} for a proof of the
quasi-optimality, and also~\cite{notesJayG}.
\begin{proposition}\label{prop:practicalDPG}
  Let $f\in L^2(\Omega)$ and $U_h\subseteq U$ be a finite-dimensional subspace.
  Let $\uu\in U$ denote the unique solution of~\eqref{eq:uwf} and suppose~\eqref{eq:fortin} holds.
  Let $\uu_h\in U_{h}$ denote the unique solution of~\eqref{eq:practicalDPG}.
  Then,
  \begin{align*}
    \norm{B(\uu-\uu_h)}{V_r'} = \min_{\ww_h\in U_{h}} \norm{B(\uu-\ww_h)}{V_r'}.
  \end{align*}
  In particular, quasi-optimality
  \begin{align*}
    \norm{\uu-\uu_h}U \leq C_\mathrm{opt} \min_{\ww_h\in U_{h}} \norm{\uu-\ww_h}U
  \end{align*}
  holds, where $C_\mathrm{opt}>0$ depends only on $\Omega$ and $C_\Pi$.
  \qed
\end{proposition}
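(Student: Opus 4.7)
The plan is to work with the equivalent mixed formulation \eqref{eq:mixedDPG} and to transfer the continuous inf--sup from Proposition~\ref{prop:equivalence} to the discrete pair $(U_h,V_r)$ via the Fortin operator $\Pi$. Once a discrete inf--sup condition for $U_h$ measured in the $V_r'$ norm is in hand, the minimum-residual characterisation and the quasi-optimality estimate both follow by short abstract arguments.

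The first step is the key discrete inf--sup estimate: for any $\uu_h\in U_h$, using the Fortin property $b(\uu_h,\vv-\Pi\vv)=0$ together with $\norm{\Pi\vv}V\leq C_\Pi\norm{\vv}V$, I obtain
\begin{align*}
  \norm{B\uu_h}{V'}=\sup_{\vv\in V\setminus\{0\}}\frac{b(\uu_h,\vv)}{\norm{\vv}V}
  =\sup_{\vv\in V\setminus\{0\}}\frac{b(\uu_h,\Pi\vv)}{\norm{\vv}V}
  \leq C_\Pi\sup_{\vv_r\in V_r\setminus\{0\}}\frac{b(\uu_h,\vv_r)}{\norm{\vv_r}V}
  =C_\Pi\norm{B\uu_h}{V_r'}.
\end{align*}
Combined with Proposition~\ref{prop:equivalence}, this yields $\norm{\uu_h}U\leq C^{1/2}C_\Pi\norm{B\uu_h}{V_r'}$. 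Standard saddle-point (Brezzi) theory then gives existence and uniqueness of $(\uu_h,\eeps_r)\in U_h\times V_r$ solving \eqref{eq:mixedDPG}, since the Riesz form $\ip{\cdot}{\cdot}_V$ is coercive on $V_r$ and the discrete inf--sup for $b$ over $U_h\times V_r$ has just been established. Equivalence with \eqref{eq:practicalDPG} is quoted from \cite{BoumaGH_DPGconvRates,notesJayG}.

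For the minimum-residual identity, I would note that the mixed system \eqref{eq:mixedDPG} is exactly the first-order optimality condition of the discrete least-squares problem $\min_{\ww_h\in U_h}\tfrac12\norm{B\ww_h-F}{V_r'}^2$, where $\eeps_r\in V_r$ is the $V$-Riesz representative of the residual $F-B\uu_h$ restricted to $V_r$. Since $B\uu=F$, the minimiser satisfies $\norm{B(\uu-\uu_h)}{V_r'}=\min_{\ww_h\in U_h}\norm{B(\uu-\ww_h)}{V_r'}$. Quasi-optimality is then a triangle-inequality argument: for arbitrary $\ww_h\in U_h$, apply the discrete inf--sup to $\uu_h-\ww_h\in U_h$, followed by continuity of $B$ and the minimum-residual identity, to get
\begin{align*}
  \norm{\uu_h-\ww_h}U\leq C^{1/2}C_\Pi\norm{B(\uu_h-\ww_h)}{V_r'}
  \leq 2C^{1/2}C_\Pi\norm{B(\uu-\ww_h)}{V_r'}\leq 2C^{1/2}C_\Pi\norm{b}{}\,\norm{\uu-\ww_h}U,
\end{align*}
and conclude $\norm{\uu-\uu_h}U\leq(1+2C^{1/2}C_\Pi\norm{b}{})\norm{\uu-\ww_h}U$ by one more triangle inequality.

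The only non-mechanical point is the very first step, where one must be careful that $\Pi\vv$ is a \emph{valid} test in $V_r$ and that the Fortin orthogonality $b(\uu_h,\vv-\Pi\vv)=0$ is applied for $\uu_h$ ranging over the same $U_h$ used in the discrete scheme; this is precisely what is encoded in \eqref{eq:fortin}. Everything else is a repackaging of the continuous theory once the constant $C_\Pi$ replaces the continuous stability constant.
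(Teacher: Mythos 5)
Your proof is correct and follows the standard route: the Fortin properties~\eqref{eq:fortin} transfer the continuous inf--sup of Proposition~\ref{prop:equivalence} to a discrete one in the $V_r'$ norm, the mixed system~\eqref{eq:mixedDPG} is recognised as the normal equations of the discrete residual minimisation (giving the best-approximation identity in $\norm[]{B\cdot}{V_r'}$), and quasi-optimality follows by a triangle inequality with a constant depending only on $C$, $\norm[]{b}{}$ and $C_\Pi$, hence only on $\Omega$ and $C_\Pi$. The paper does not spell out a proof but refers to \cite[Theorem~2.1]{practicalDPG} and \cite{notesJayG}, whose argument is essentially the one you give, so there is nothing to flag.
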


For the remainder of this work we will use the following approximation spaces.
We define for $p\in\N_0$
\begin{align*}
  U_{hp} := \PP^p(\TT) \times  \PP^p(\TT)^2 \times
  \PP_{c,0}^{p+1}(\cS) \times \PP^p(\cS),
\end{align*}
as well as the augmented space
\begin{align*}
  U_{hp}^+ := \PP^{p+1}(\TT) \times \PP^p(\TT)^2 \times
  \PP_{c,0}^{p+1}(\cS) \times \PP^p(\cS).
\end{align*}

\begin{remark}\label{rem:testSpace}
  In this work we use polynomial spaces $U_h \in \{U_{hp},U_{hp}^+\}$
  and polynomial spaces $V_r$ defined on triangular meshes $\TT$.
  Specifically, 
  \begin{align}\label{eq:assVr}
    V_r := \PP^{r_1}(\TT) \times \PP^{r_2}(\TT)^2, \qquad
    \text{where at least } r_1,r_2\geq p+2. 
  \end{align}
  Hence, we have $\PP_{c,0}^1(\TT)\times \RT^{0}(\TT) \subseteq 
  \PP_{c,0}^1(\TT)\times \RT^{p+1}(\TT)\subseteq V_r$. 
  This property
  is used in the proofs of Theorem~\ref{thm:postproc} resp. Theorem~\ref{thm:an} below.
  Fortin operators for that particular case exist, see~\cite{practicalDPG} for their definition and analysis.
  Let us note that in~\cite{practicalDPG} it was proven that $r_1=p+2$ and $r_2=p+2$ is enough to guarantee the
  existence of Fortin operators for the ultra-weak formulation of the Poisson problem in 2D using the trial space 
  $U_{hp}$. Therefore, it is also guaranteed for the augmented trial space $U_{hp}^+$ if $r_1=r_2=p+3$, since
  $U_{hp}^+\subset U_{h(p+1)}$.
  Nevertheless, for the numerical experiments from Section~\ref{sec:ex} we choose throughout $r_1=r_2=p+2$, see
  the discussion at the beginning of Section~\ref{sec:ex}.
  \qed
\end{remark}

\section{A priori convergence}\label{sec:apriori}
In this short section we refine the a priori convergence analysis.
The standard analysis so far, see, e.g.,~\cite{DemkowiczG_11_ADM}, estimates
\begin{align*}
  \min_{\widehat\tau_h\in \PP^p(\cS)}\norm{\widehat \sigma- \widehat\tau_h}{-1/2,\cS}
  \leq \norm{\ssigma-\Pi_\div^p\ssigma}{\Hdivset\Omega},
\end{align*}
where $\widehat\sigma = \gamma_{\normal,\cS} \ssigma$
and, then, applies approximation results of the Raviart-Thomas projector 
in the $\Hdivset\Omega$ norm. 
To be more precise, standard estimates give
\begin{align*}
  \norm{\ssigma-\Pi_\div^p\ssigma}{\Hdivset\Omega} 
  &\leq \norm{\ssigma-\Pi_\div^p\ssigma}{} + \norm{\div(\ssigma-\Pi_\div^p\ssigma)}{} \\
  &\lesssim h^s \norm{\ssigma}{H^s(\Omega)} + \norm{(1-\Pi^p)\div\ssigma}{}.
\end{align*}
To obtain powers of $h$ for the second term, we have to assume some regularity of $\div\ssigma = \Delta u = u-f$,
which means regularity assumptions on $f$.
This is too rough, but can be sharpened using the following result.
\begin{theorem}\label{thm:Hm12est}
  Let $p\in\N_0$.
  Let $\ssigma\in H^{s}(\Omega)^2\cap\Hdivset\Omega$ for some $s\in(1/2,p+1]$.
  Then,
  \begin{align*}
     \norm{\gamma_{\normal,\cS}(1-\Pi_\div^p)\ssigma}{-1/2,\cS}
     \leq C \cdot
     \begin{cases}
        h^s \norm{\ssigma}{H^s(\Omega)}+ h\norm{\div\ssigma}{} &\text{if } s\in(1/2,1), \\
        h^s \norm{\ssigma}{H^s(\Omega)} & \text{if } s\in[1,p+1].
     \end{cases}
  \end{align*}
  The constant $C>0$ depends only on the constants from~\eqref{eq:approx}.
\end{theorem}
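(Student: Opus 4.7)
My plan is to exploit the characterization of $\|\cdot\|_{-1/2,\cS}$ as a minimum energy extension norm: for every $\ttau^*\in\Hdivset\Omega$ with $\gamma_{\normal,\cS}\ttau^* = \gamma_{\normal,\cS}(1-\Pi_\div^p)\ssigma$ one has
\begin{align*}
  \|\gamma_{\normal,\cS}(1-\Pi_\div^p)\ssigma\|_{-1/2,\cS} \leq \|\ttau^*\|_{\Hdivset\Omega}.
\end{align*}
The obvious choice $\ttau^* = (1-\Pi_\div^p)\ssigma$ already produces the unfavourable $\|(1-\Pi^p)\div\ssigma\|$ term that the authors wish to avoid. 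My idea is instead to subtract an elementwise gradient that absorbs the entire divergence but leaves the skeleton normal trace unchanged.

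Concretely, for each $T\in\TT$ I would let $\phi_T\in H^1(T)$ with $\int_T\phi_T = 0$ solve the local Neumann problem
\begin{align*}
  -\Delta\phi_T = -(1-\Pi^p)\div\ssigma|_T \text{ in } T,\qquad \partial_\normal\phi_T = 0 \text{ on }\partial T.
\end{align*}
Solvability follows from the compatibility $\int_T (1-\Pi^p)\div\ssigma = 0$, which holds because $\Pi^p$ reproduces constants. Assembling $\phi_T$ into a piecewise function $\phi$, the homogeneous Neumann conditions on every element boundary ensure that the piecewise gradient $\nabla\phi$ has matching normal traces across interior edges and vanishes normally on the entire skeleton; hence $\nabla\phi \in \Hdivset\Omega$ with $\gamma_{\normal,\cS}\nabla\phi = 0$. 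Setting $\ttau^* := (1-\Pi_\div^p)\ssigma - \nabla\phi$, the skeleton trace is preserved, and using the commutativity \eqref{eq:RTcomm} together with the PDE for $\phi_T$ one obtains $\div\ttau^* = (1-\Pi^p)\div\ssigma - \Delta\phi = 0$. Thus $\|\ttau^*\|_{\Hdivset\Omega} = \|\ttau^*\|$, and the task reduces to bounding $\|(1-\Pi_\div^p)\ssigma\| + \|\nabla\phi\|$.

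The first summand is $\leq Ch^s\|\ssigma\|_{H^s(\Omega)}$ by \eqref{eq:approx}. For the second, testing the weak form with $\phi_T$ and invoking the Poincar\'e inequality on the shape-regular triangle $T$ (with constant of order $h_T$ on zero-mean functions) yields $\|\nabla\phi_T\|_T \leq Ch_T\|(1-\Pi^p)\div\ssigma\|_T$, and summation gives $\|\nabla\phi\| \leq Ch\|(1-\Pi^p)\div\ssigma\|$. A final case distinction closes the argument: for $s\in(1/2,1)$ stability yields $\|(1-\Pi^p)\div\ssigma\| \leq \|\div\ssigma\|$, producing the $h\|\div\ssigma\|$ term; for $s\in[1,p+1]$ the approximation estimate \eqref{eq:approx} applied to $\div\ssigma\in H^{s-1}(\Omega)$ together with the bound $|\div\ssigma|_{H^{s-1}(\Omega)} \leq \|\ssigma\|_{H^s(\Omega)}$ gives $Ch^{s-1}\|\ssigma\|_{H^s(\Omega)}$, so the prefactor $h$ brings the correction to order $h^s\|\ssigma\|_{H^s(\Omega)}$. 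The step I expect to be the main subtlety is checking that the piecewise-gradient correction $\nabla\phi$ really lies in $\Hdivset\Omega$ with zero skeleton normal trace: this is precisely where the Neumann condition on each \emph{individual} element (rather than a global boundary condition) is essential, and where the $L^2$-orthogonality of $1-\Pi^p$ against constants is used to guarantee local solvability.
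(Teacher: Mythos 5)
Your proof is correct, but it follows a genuinely different route from the paper's. The paper invokes the duality identity~\eqref{eq:identity}, namely $\norm{\widehat\sigma}{-1/2,\cS}=\sup_{0\neq v\in H^1(\TT)}\dual{\widehat\sigma}{v}_\cS/\norm{v}{H^1(\TT)}$, integrates by parts elementwise, and uses the commutativity~\eqref{eq:RTcomm} together with the orthogonality of $1-\Pi^p$ to replace the test function $v$ by $(1-\Pi^p)v$ in the divergence term; the decisive extra factor $h$ then comes from $\norm{(1-\Pi^p)v}{}\lesssim h\norm{\pwnabla v}{}$. You instead work directly with the infimum (minimal energy extension) definition of $\norm{\cdot}{-1/2,\cS}$ and manufacture a \emph{divergence-free} extension of the trace by subtracting the piecewise gradients of local zero-mean Neumann solutions; your extra factor $h$ comes from the mean-zero Poincar\'e inequality $\norm{\phi_T}{\el}\lesssim h_T\norm{\nabla\phi_T}{\el}$ on shape-regular elements. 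The two mechanisms are dual manifestations of the same first-order approximation property~\eqref{eq:approx:b}, and both arguments use~\eqref{eq:RTcomm} and the compatibility $\int_T(1-\Pi^p)\div\ssigma\,dx=0$ at the analogous point; the final case distinction in $s$ is identical. What your version buys is self-containedness: it needs only the definition of the trace norm given in Section~\ref{sec:dpg}, not the sup-characterization from the broken-space literature. The price is the local Neumann construction and the verification --- which you correctly flag as the subtle step, and which does hold since the one-sided normal traces of $\nabla\phi_T$ vanish in $H^{-1/2}(\partial\el)$ on every element boundary --- that the piecewise gradient glues to a global $\Hdivset\Omega$ field with zero skeleton normal trace. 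The constant dependencies are the same in both proofs.
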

\begin{proof}
  In~\cite[Theorem~2.3]{breakSpace} the identity
  \begin{align}\label{eq:identity}
    \norm{\widehat\sigma}{-1/2,\cS} = \sup_{0\neq v\in H^1(\TT)} 
    \frac{\dual{\widehat\sigma}{v}_\cS}{\norm{v}{H^1(\TT)}} \quad\text{for all }
    \widehat\sigma\in H^{-1/2}(\cS)
  \end{align}
  was proven.
  Since $\ssigma\in H^s(\Omega)^2$ for some $s>1/2$, $\Pi_\div^p\ssigma$ is well-defined and
  $\gamma_{\normal,\cS}\Pi_\div^p\ssigma\in \PP^p(\cS)$. With piecewise integration by
  parts, the commutativity property~\eqref{eq:RTcomm}, and the projection property of $\Pi^p$, we obtain
  \begin{align*}
    \dual{\gamma_{\normal,\cS}(\ssigma-\Pi_\div^p\ssigma)}v_\cS &=
    \ip{\ssigma-\Pi_\div^p\ssigma}{\pwnabla v} + \ip{\div(\ssigma-\Pi_\div^p\ssigma)}{v} \\
    &=\ip{\ssigma-\Pi_\div^p\ssigma}{\pwnabla v} + \ip{(1-\Pi^p)\div\ssigma}{v} \\
    &=\ip{\ssigma-\Pi_\div^p\ssigma}{\pwnabla v} + \ip{(1-\Pi^p)\div\ssigma}{(1-\Pi^p)v}.
  \end{align*}
  Then, the approximation properties~\eqref{eq:approx} show
  \begin{align}\label{eq:dualest}
    \dual{\gamma_{\normal,\cS}(\ssigma-\Pi_\div^p\ssigma)}v_\cS 
    \lesssim h^s\norm{\ssigma}{H^s(\Omega)}\norm{\pwnabla v}{} 
    + h \norm{(1-\Pi^p)\div\ssigma}{}\norm{\pwnabla v}{}.
  \end{align}
  For $s\in(1/2,1)$ we employ boundedness of $(1-\Pi^p)$ to see
  \begin{align*}
    h\norm{(1-\Pi^p)\div\ssigma}{} \leq h\norm{\div\ssigma}{},
  \end{align*}
  and for $s\in[1,p+1]$ we again use~\eqref{eq:approx} to get
  \begin{align*}
    h\norm{(1-\Pi^p)\div\ssigma}{} \lesssim h h^{s-1}\norm{\div\ssigma}{H^{s-1}(\Omega)}
    \lesssim h^s\norm{\ssigma}{H^s(\Omega)}.
  \end{align*}
  Using the last two estimates in~\eqref{eq:dualest}, dividing by $\norm{v}{H^1(\TT)}$ and taking the supremum over
  $H^1(\TT)\setminus\{0\}$, the assertion follows from~\eqref{eq:identity}.
\end{proof}
    
\begin{corollary}\label{cor:apriori}
  Let $f\in L^2(\Omega)$ and let $\uu\in U$ denote the unique solution of~\eqref{eq:uwf}.
  Let $p\in\N_0$. 
  Suppose $u\in H^{1+s}(\Omega)$ for some $s\in(1/2,p+1]$.
  Suppose there exist Fortin operators~\eqref{eq:fortin} for the spaces $U_{hp}$ and $U_{hp}^+$.
  Let $\uu_h\in U_h\in\{U_{hp},U_{hp}^+\}$ be the unique solution of~\eqref{eq:practicalDPG}. 
  Then,
  \begin{align*}
    \norm{\uu-\uu_h}U \leq C\cdot \begin{cases}
       h^{s}\norm{u}{H^{1+s}(\Omega)} + h \norm{f}{} & \text{if } s\in(1/2,1), \\
       h^{s}\norm{u}{H^{1+s}(\Omega)} &\text{if } s\in[1,p+1].
    \end{cases}
  \end{align*}
  The constant $C>0$ depends only on $\Omega$, the constants from~\eqref{eq:approx}, and $C_\Pi$.
  \qed
\end{corollary}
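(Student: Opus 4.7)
The plan is to invoke the quasi-optimality statement of Proposition~\ref{prop:practicalDPG} to reduce the problem to bounding the best approximation error in $U_h$, and then to construct an explicit comparison element by component-wise projection, using the sharpened flux estimate of Theorem~\ref{thm:Hm12est} to handle the skeleton component at low regularity.

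\smallskip

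\noindent\textbf{Step 1: reduction via quasi-optimality.} By Proposition~\ref{prop:practicalDPG}, it suffices to exhibit some $\ww_h\in U_h$ with $\norm{\uu-\ww_h}_U$ bounded by the right-hand side. Recall $\uu=(u,\ssigma,\widehat u,\widehat\sigma)$ with $\ssigma=\nabla u$, $\widehat u=\gamma_{0,\cS}u$, $\widehat\sigma=\gamma_{\normal,\cS}\nabla u$, and that the $U$-norm splits into the four pieces.

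\smallskip

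\noindent\textbf{Step 2: choice of approximants.} I take
\[
\ww_h := \bigl(\Pi^{p'}u,\; \Pi^p\ssigma,\; \gamma_{0,\cS}(\Pi_\nabla^{p+1}u),\; \gamma_{\normal,\cS}(\Pi_\div^p\ssigma)\bigr),
\]
where $p'=p$ for $U_h=U_{hp}$ and $p'=p+1$ for $U_h=U_{hp}^+$. By construction these components lie in the respective discrete subspaces (using~\eqref{eq:RTcomm} to see that the normal trace of $\Pi_\div^p\ssigma$ is a piecewise polynomial of degree $p$, and that $\Pi_\nabla^{p+1}u\in H_0^1(\Omega)$ gives a vanishing boundary value).

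\smallskip

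\noindent\textbf{Step 3: term-by-term estimates.} For the volume pieces, \eqref{eq:approx} applied to $u\in H^{1+s}(\Omega)$ and $\ssigma=\nabla u\in H^{s}(\Omega)^2$ yields
\[
\norm{u-\Pi^{p'}u}{}\lesssim h^{\min(1+s,p'+1)}\snorm{u}{H^{1+s}(\Omega)},\qquad \norm{\ssigma-\Pi^p\ssigma}{}\lesssim h^{s}\snorm{\ssigma}{H^{s}(\Omega)},
\]
both dominated by $h^s\norm{u}{H^{1+s}(\Omega)}$. For the Dirichlet trace, the definition of the minimal-energy extension norm and the stated bound on $\Pi_\nabla^{p+1}$ give
\[
\norm{\widehat u-\gamma_{0,\cS}(\Pi_\nabla^{p+1}u)}{1/2,\cS}\le \norm{u-\Pi_\nabla^{p+1}u}{H^1(\Omega)}\lesssim h^{s}\snorm{u}{H^{1+s}(\Omega)}.
\]
For the normal-trace piece I apply Theorem~\ref{thm:Hm12est} to $\ssigma=\nabla u$: for $s\in[1,p+1]$ this yields $h^{s}\norm{\nabla u}{H^{s}(\Omega)}$, while for $s\in(1/2,1)$ there is an additional contribution $h\,\norm{\div\ssigma}{}=h\,\norm{\Delta u}{}=h\,\norm{u-f}{}\lesssim h\,\norm{f}{}$, using the a priori bound $\norm{u}{}\lesssim\norm{f}{}$ from Section~\ref{sec:model}.

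\smallskip

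\noindent\textbf{Step 4: assembly.} Summing the four squared contributions and taking the square root gives exactly the case distinction in the statement, with the constant depending on $\Omega$ (through $C_\mathrm{opt}$ and the trace estimates), on the constants from~\eqref{eq:approx}, and on $C_\Pi$. The main point, and the reason this refines the classical analysis, is Step~3 for $\widehat\sigma$: the naive route via $\norm{\ssigma-\Pi_\div^p\ssigma}{\Hdivset\Omega}$ would force the regularity $\div\ssigma\in H^{s-1}$ and hence a condition on $f$; Theorem~\ref{thm:Hm12est} replaces that by the minimal bound $h\,\norm{f}{}$ at low regularity, which is the only subtle point of the proof.
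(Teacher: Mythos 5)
Your proposal is correct and follows essentially the same route as the paper: reduce via the quasi-optimality of Proposition~\ref{prop:practicalDPG}, take the component-wise projected comparison element $(\Pi^{p}u,\Pi^p\nabla u,\gamma_{0,\cS}\Pi_\nabla^{p+1}u,\gamma_{\normal,\cS}\Pi_\div^p\nabla u)$, and treat the flux component with Theorem~\ref{thm:Hm12est} together with the identity $\div\nabla u=u-f$ for the low-regularity case. The only cosmetic differences are your use of $\Pi^{p+1}$ for the first component in the augmented space (the paper simply uses $\Pi^p$ for both, which suffices) and your appeal to $\norm{u}{}\lesssim\norm{f}{}$ where the paper keeps the term $h\norm{u}{}$ and absorbs it; also, membership of the flux approximant in $\PP^p(\cS)$ follows from the definition of that space rather than from~\eqref{eq:RTcomm}.
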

\begin{proof}
  Set $\ww_h = (\Pi^p u,\Pi^p\nabla u,\gamma_{0,\cS}\Pi_\nabla^{p+1}u,\gamma_{\normal,\cS}\Pi_\div^p\nabla u) \in U_h$. 
  Then, with the approximation properties~\eqref{eq:approx} we get
  \begin{align*}
    \norm{\uu-\ww_h}U &\lesssim h^s\norm{u}{H^s(\Omega)} + h^s\norm{\nabla u}{H^s(\Omega)}
    + h^s\norm{u}{H^{1+s}(\Omega)} + \norm{\gamma_{\normal,\cS}(1-\Pi_\div^p)\nabla u}{-1/2,\cS} \\
    &\lesssim h^s\norm{u}{H^{1+s}(\Omega)} 
    + \norm{\gamma_{\normal,\cS}(1-\Pi_\div^p)\nabla u}{-1/2,\cS}.
  \end{align*}
  The last term is tackled with Theorem~\eqref{thm:Hm12est}. For $s\geq 1$ the application is straightforward. For
  $s\in(1/2,1)$ note that $\div(\nabla u) = u-f \in L^2(\Omega)$ since $u$ is the solution of~\eqref{eq:model}. Thus,
  \begin{align*}
    h\norm{\div\nabla u}{} \lesssim h\norm{u}{} + h\norm{f}{}.
  \end{align*}
  The estimates above together with the quasi-optimality from Proposition~\ref{prop:practicalDPG} finish the proof.
\end{proof}

\begin{remark}
  If we have regularity $u\in H^2(\Omega)$, then for the lowest-order case, i.e., $p=0$, we get
  \begin{align*}
    \norm{\uu-\uu_h}U \lesssim h \norm{u}{H^2(\Omega)}.
  \end{align*}
  In particular, this resembles results for standard FEM schemes.

  Moreover, the result from Corollary~\ref{cor:apriori} can be derived in the same fashion 
  for other DPG formulations that use the trace
  space $H^{-1/2}(\cS)$, e.g., the primal DPG formulation defined and analyzed in~\cite{DemkowiczG_13_PDM}.
  \qed
\end{remark}

\section{Augmented trial space}\label{sec:an}
In this section we consider discrete solutions $\uu_h = (u_h,\ssigma_h,\widehat u_h,\widehat \sigma_h)$
from the augmented space $U_{hp}^+$ and prove with duality
arguments that $\norm{u-u_h}{}$ converges at a higher rate than the total error $\norm{\uu-\uu_h}U$.
In Section~\ref{sec:an:aux} we collect some auxiliary results and in Section~\ref{sec:an:main} we state the main result
of this section along with its proof.

\subsection{Auxiliary results}\label{sec:an:aux}
For the proof of Theorem~\ref{thm:an} we need some auxiliary results that we develop in the following.

\begin{lemma}\label{lem:mixed}
  Let $g\in L^2(\Omega)$. There exists a unique solution $\vv = (v,\ttau) \in H_0^1(\Omega)\times \Hdivset\Omega$ of the
  first-order system
  \begin{subequations}\label{eq:mixed}
  \begin{alignat}{2}
    \div\ttau + v &= g &\quad&\text{in }\Omega, \label{eq:mixed:a}\\
    \ttau + \nabla v &= 0 &\quad&\text{in }\Omega. \label{eq:mixed:b}
  \end{alignat}
  \end{subequations}
  In particular, $\norm{v}{H^1(\Omega)}+\norm{\ttau}{\Hdivset\Omega}\lesssim \norm{g}{}$.

  There exists a regularity shift $s'\in(1/2,1]$ such that
  \begin{align}\label{eq:mixed:regshift}
    v\in H^{1+s'}(\Omega) \quad\text{and}\quad \norm{v}{H^{1+s'}(\Omega)} \lesssim \norm{g}{}.
  \end{align}
  Then, for $p\in\N_0$,
  \begin{align*}
    \norm{v-\Pi_\nabla^1 v}{H^1(\Omega)} + 
    \norm{\ttau-\Pi_\div^p\ttau}{} \lesssim h^{s'} \norm{g}{}.
  \end{align*}
  If $\Omega$ is convex then $s'=1$.
\end{lemma}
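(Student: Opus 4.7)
The plan is to recognise the first-order system as the mixed formulation of the scalar reaction-diffusion problem already discussed in Section~\ref{sec:model}, and then invoke the regularity and approximation properties collected there.

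First I would eliminate $\ttau$ from~\eqref{eq:mixed}. Substituting~\eqref{eq:mixed:b} into~\eqref{eq:mixed:a} yields $-\Delta v + v = g$ in $\Omega$ with $v = 0$ on $\Gamma$, i.e., exactly the model problem~\eqref{eq:model} with right-hand side $g$. By what was recalled in Section~\ref{sec:model}, this problem admits a unique weak solution $v \in H_0^1(\Omega)$ for every $g \in L^2(\Omega)$, and there exists $s' \in (1/2, s_\Omega]$ such that $v \in H^{1+s'}(\Omega)$ with $\norm{v}{H^{1+s'}(\Omega)} \lesssim \norm{g}{}$. Setting $\ttau := -\nabla v \in H^{s'}(\Omega)^2$, the relation $\div \ttau = v - g \in L^2(\Omega)$ shows $\ttau \in \Hdivset\Omega$, and the stability bound $\norm{v}{H^1(\Omega)} + \norm{\ttau}{\Hdivset\Omega} \lesssim \norm{g}{}$ follows immediately. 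Uniqueness is inherited from the scalar problem: any other solution would give by elimination a second $H_0^1$-solution of the same Poisson-type equation.

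For the approximation estimate I would just combine the two relevant bounds from~\eqref{eq:approx}. The first piece,
\begin{equation*}
  \norm{v - \Pi_\nabla^1 v}{H^1(\Omega)} \lesssim h^{s'} \snorm{v}{H^{1+s'}(\Omega)} \lesssim h^{s'} \norm{g}{},
\end{equation*}
uses the interpolation bound for $\Pi_\nabla^1$ with $p = 0$ (so that $s' \in (1/2, 1]$ is admissible). The second piece,
\begin{equation*}
  \norm{\ttau - \Pi_\div^p \ttau}{} \lesssim h^{s'} \snorm{\ttau}{H^{s'}(\Omega)} \lesssim h^{s'} \norm{v}{H^{1+s'}(\Omega)} \lesssim h^{s'} \norm{g}{},
\end{equation*}
uses the Raviart--Thomas projection bound from~\eqref{eq:approx}, noting that $\snorm{\ttau}{H^{s'}(\Omega)} = \snorm{\nabla v}{H^{s'}(\Omega)} \leq \norm{v}{H^{1+s'}(\Omega)}$, and that $s' > 1/2$ makes the projection well-defined. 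Summing the two estimates gives the claim. Finally, for convex $\Omega$ the regularity shift $s_\Omega = 1$ recalled in Section~\ref{sec:model} forces $s' = 1$.

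No step is really an obstacle; the only point requiring care is to make sure that $s' > 1/2$ is used consistently so that both $\Pi_\nabla^1$ and $\Pi_\div^p$ are well-defined on $v$ and $\ttau$, and to observe that nothing stronger than $v \in H^{1+s'}(\Omega)$ is needed to control both terms.
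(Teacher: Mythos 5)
Your proposal is correct and follows essentially the same route as the paper: reduce the first-order system to the scalar reaction-diffusion problem $-\Delta v + v = g$, invoke the regularity shift from Section~\ref{sec:model}, deduce $\ttau = -\nabla v \in H^{s'}(\Omega)^2$, and apply the approximation bounds~\eqref{eq:approx}. The only cosmetic difference is that the paper cites the DPG literature for well-posedness of the mixed system, whereas you derive existence, uniqueness, and stability directly from the scalar problem, which is a perfectly valid (and slightly more self-contained) way to fill in that step.
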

\begin{proof}
  Unique solvability and stability of the first-order system is known, see, e.g.,~\cite{DemkowiczG_11_ADM} for the analysis
  of a similar problem in the context of DPG methods.

  We note that $v\in H_0^1(\Omega)$ satisfies the PDE
  \begin{align*}
    -\Delta v + v = g.
  \end{align*}
  The existence of $s'$ and~\eqref{eq:mixed:regshift} follow directly from the discussion in Section~\ref{sec:model}.
  The case of convex domains was also discussed there.
  By~\eqref{eq:mixed:b} we then have $\ttau\in H^{s'}(\Omega)^2$.
  Standard approximation results~\eqref{eq:approx} finish the proof.
\end{proof}

\begin{lemma}\label{lem:w}
  Let $g\in L^2(\Omega)$.
  There exists $\vv\in V$ and $\ww\in U$ such that 
  \begin{align*}
    \ip{u}g = b(\uu,\vv) = b(\uu,\Theta\ww) \quad\text{for all }\uu=(u,\ssigma,\widehat u,\widehat\sigma)\in U.
  \end{align*}
  In particular,
  \begin{align*}
    \ww = (g,0,\gamma_{0,\cS}v,\gamma_{\normal,\cS}\ttau),
  \end{align*}
  where $\vv=(v,\ttau)\in V$ is the unique solution of~\eqref{eq:mixed}.
\end{lemma}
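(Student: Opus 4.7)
The plan is to take $\vv = (v,\ttau) \in V$ as the solution of the first-order system~\eqref{eq:mixed} provided by Lemma~\ref{lem:mixed}. Since $v \in H_0^1(\Omega) \subset H^1(\TT)$ and $\ttau \in \Hdivset{\Omega} \subset \Hdivset{\TT}$, this $\vv$ is admissible in $V$.

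First I would verify directly that $b(\uu,\vv) = \ip{u}{g}$ for all $\uu = (u,\ssigma,\widehat u,\widehat\sigma) \in U$. The volume contribution collapses cleanly: equation~\eqref{eq:mixed:a} gives $\pwdiv\ttau + v = g$ and equation~\eqref{eq:mixed:b} gives $\ttau + \pwnabla v = 0$, so
\begin{align*}
  \ip{u}{\pwdiv\ttau+v} + \ip{\ssigma}{\ttau+\pwnabla v} = \ip{u}{g}.
\end{align*}
For the two skeleton pairings I would argue that they vanish because one of the factors lives in a globally conforming space. Indeed $\ttau \in \Hdivset{\Omega}$ has a single-valued normal trace on interior edges, while any lift of $\widehat u \in H_0^{1/2}(\cS)$ is continuous across interior edges and zero on $\Gamma$; an elementwise integration by parts summed over $\TT$ then produces $\dual{\widehat u}{\ttau\cdot\normal}_\cS = 0$. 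Symmetrically, $v \in H_0^1(\Omega)$ combined with a lift of $\widehat\sigma \in H^{-1/2}(\cS)$ to $\Hdivset{\Omega}$ yields $\dual{\widehat\sigma}{v}_\cS = 0$.

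Next I would define $\ww := (g,0,\gamma_{0,\cS}v,\gamma_{\normal,\cS}\ttau) \in U$ and show that $\Theta\ww = \vv$. By the defining relation~\eqref{eq:ttt} and uniqueness, it suffices to verify
\begin{align*}
  \ip{\vv}{\vv'}_V = b(\ww,\vv') \quad \text{for all } \vv' = (v',\ttau') \in V.
\end{align*}
I would expand the left-hand side and substitute $\pwnabla v = -\ttau$ and $\pwdiv\ttau = g-v$ from~\eqref{eq:mixed}. For the right-hand side I would expand $b(\ww,\vv')$ using the integration-by-parts identities
\begin{align*}
  \dual{\gamma_{0,\cS}v}{\ttau'\cdot\normal}_\cS &= \ip{\nabla v}{\ttau'} + \ip{v}{\pwdiv\ttau'}, \\
  \dual{\gamma_{\normal,\cS}\ttau}{v'}_\cS &= \ip{\ttau}{\pwnabla v'} + \ip{\div\ttau}{v'},
\end{align*}
which are valid precisely because $v \in H_0^1(\Omega)$ and $\ttau \in \Hdivset{\Omega}$. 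A term-by-term comparison (after substituting $\div\ttau = g - v$) then yields the required identity. Combining both steps gives $b(\uu,\vv) = b(\uu,\Theta\ww) = \ip{u}{g}$ for every $\uu \in U$.

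The analytical content is essentially a sign-tracking exercise; the only delicate point is applying the two integration-by-parts identities with the correct factor in the globally conforming space, so that the skeleton pairings reduce to volume terms without incurring jump contributions. No extra regularity beyond what Lemma~\ref{lem:mixed} supplies is needed.
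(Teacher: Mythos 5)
Your proposal is correct and follows essentially the same route as the paper: choose $\vv=(v,\ttau)$ from Lemma~\ref{lem:mixed}, observe that the volume terms of $b(\uu,\vv)$ collapse to $\ip{u}{g}$ while the skeleton pairings vanish by conformity of $v$ and $\ttau$, and then identify $\ww=(g,0,\gamma_{0,\cS}v,\gamma_{\normal,\cS}\ttau)$ via the defining relation~\eqref{eq:ttt} using elementwise integration by parts. The only cosmetic difference is that the paper first obtains $\ww$ abstractly from the isomorphism $\Theta$ and then derives its explicit form by expanding $\ip{\vv}{\cdot}_V$, whereas you posit the formula and verify it directly; the underlying computation is identical.
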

\begin{proof}
  Let $\vv=(v,\ttau)\in H_0^1(\Omega)\times \Hdivset\Omega$ be the solution of~\eqref{eq:mixed}.
  Then,
  \begin{align*}
    \ip{u}g = \ip{u}{\div\ttau+v} = b(\uu,\vv).
  \end{align*}
  Since $\Theta : U \to V$ is an isomorphism, we find a unique $\ww\in U$ with $\Theta\ww = \vv$.

  We can also calculate the function $\ww$ explicitly.
  Note that by~\eqref{eq:ttt} we have to find $\ww\in U$ with
  \begin{align*}
    \ip{\vv}{(\mu,\llambda)}_V = b(\ww,(\mu,\llambda)) \quad\text{for all }(\mu,\llambda)\in V.
  \end{align*}
  Recall that $v\in H_0^1(\Omega)$ satisfies
  \begin{align*}
    -\Delta v + v = g \in L^2(\Omega).
  \end{align*}
  Multiplying this equation with $\mu\in H^1(\TT)$ and integrating by parts (elementwise) we get
  \begin{align*}
    \ip{\nabla v}{\nabla \mu}_T + \ip{v}\mu_T = \ip{g}\mu_T + \dual{\nabla v\cdot\normal_T}{\mu}_{\partial T}.
  \end{align*}
  Using~\eqref{eq:mixed:b} and summing over all elements, we obtain
  \begin{align*}
    \ip{\nabla v}{\pwnabla\mu} + \ip{v}\mu = \ip{g}\mu - \dual{\gamma_{\normal,\cS}\ttau}{\mu}_{\cS}.
  \end{align*}
  Relation~\eqref{eq:mixed:a} gives
  \begin{align*}
    \ip{\div\ttau}{\pwdiv\llambda} = \ip{g}{\pwdiv\llambda} - \ip{v}{\pwdiv\llambda}
  \end{align*}
  and~\eqref{eq:mixed:b} with integration by parts gives
  \begin{align*}
    \ip{\ttau}{\llambda} = -\ip{\nabla v}{\llambda} = \ip{v}{\pwdiv\llambda}
    -\dual{\gamma_{0,\cS}v}{\llambda\cdot\normal}_{\cS}.
  \end{align*}
  Combining the last three relations we see that
  \begin{align*}
    \ip{\vv}{(\mu,\llambda)}_V = \ip{g}{\pwdiv\llambda+\mu} - \dual{\gamma_{0,\cS}v}{\llambda\cdot\normal}_\cS
    -\dual{\gamma_{\normal,\cS}\ttau}{\mu}_\cS.
  \end{align*}
  Recall that for $\ww = (w,\cchi,\widehat w,\widehat\chi)\in U$,
  \begin{align*}
    b(\ww,(\mu,\llambda)) = \ip{w}{\pwdiv\llambda+\mu} + \ip{\cchi}{\llambda+\pwnabla \mu}
    - \dual{\widehat w}{\llambda\cdot\normal}_{\cS} - \dual{\widehat\chi}{\mu}_{\cS}.
  \end{align*}
  Comparing the last two identities, we find the explicit representation 
  \begin{align*}
    \ww = (g,0,\gamma_{0,\cS}v,\gamma_{\normal,\cS}\ttau),
  \end{align*}
  which finishes the proof.
\end{proof}

\subsection{Higher convergence rate with augmented trial space}\label{sec:an:main}
With the auxiliary results from Section~\ref{sec:an:aux} we prove the following result.
\begin{theorem}\label{thm:an}
  Let $f\in L^2(\Omega)$ and let $\uu = (u,\ssigma,\widehat u,\widehat \sigma)\in U$ denote the solution
  of~\eqref{eq:uwf}. 
  Let $p\in\N_0$ and suppose~\eqref{eq:fortin} holds with $U_h := U_{hp}^+$. 
  Let $\uu_h = (u_h,\ssigma_h,\widehat u_h,\widehat\sigma_h) \in U_h$ denote the solution of~\eqref{eq:practicalDPG}. 
  Suppose $u\in H^{1+s}(\Omega)$ for some $s\in (1/2,p+1]$.
  Moreover, set $g:=\Pi^{p+1}u-u_h\in \PP^{p+1}(\TT)$ and let $s'\in(1/2,1]$ denote the regularity
  shift~\eqref{eq:mixed:regshift} of the solution $(v,\ttau)$ of~\eqref{eq:mixed}. Then,
  \begin{align*}
    \norm{u-u_h}{} \leq C h^{s'} \cdot
    \begin{cases}
      h^s\norm{u}{H^{1+s}(\Omega)} + h \norm{f}{} &\text{if } s\in(1/2,1), \\
      h^s\norm{u}{H^{1+s}(\Omega)} &\text{if } s\in[1,p+1].
    \end{cases}
  \end{align*}
  The constant $C>0$ depends only on $\Omega$, the constants from~\eqref{eq:approx}, and $C_\Pi$.
\end{theorem}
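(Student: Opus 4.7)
The plan is a duality (Aubin--Nitsche-type) argument carried out in the mixed formulation~\eqref{eq:mixedDPG} of the practical DPG, with the dual test pair supplied by Lemma~\ref{lem:w}. I first exploit the augmented trial space: since $u_h\in\PP^{p+1}(\TT)$, I decompose $u - u_h = (u-\Pi^{p+1}u) + g$ with $g=\Pi^{p+1}u-u_h\in\PP^{p+1}(\TT)$, the two summands being $L^2$-orthogonal. The first summand is already of order $h^{1+s}|u|_{H^{1+s}(\Omega)}$ by~\eqref{eq:approx}, which is absorbed into the target bound since $s'\leq 1$. It therefore suffices to show $\|g\|\lesssim h^{s'}\|\uu-\uu_h\|_U$, after which Corollary~\ref{cor:apriori} closes the proof.

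To estimate $\|g\|^2=\ip{g}{u-u_h}$, I apply Lemma~\ref{lem:w} to get $\ip{u-u_h}{g} = b(\uu-\uu_h,\vv)$ with $\vv = (v,\ttau) = \Theta\ww$ and $\ww=(g,0,\gamma_{0,\cS}v,\gamma_{\normal,\cS}\ttau)$, where $(v,\ttau)$ solves~\eqref{eq:mixed} with right-hand side $g$. Using the defining identity $\ip{\vv}{\eeps_r}_V = b(\ww,\eeps_r)$ for $\Theta\ww$, a direct computation then gives
\[
  \ip{u-u_h}{g} \;=\; a\bigl((\uu-\uu_h,-\eeps_r),(\ww,\vv)\bigr),
\]
where $a$ is the mixed bilinear form and $\eeps_r$ the discrete error representation from~\eqref{eq:mixedDPG}. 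The two Galerkin orthogonality conditions of~\eqref{eq:mixedDPG}, namely $\ip{\eeps_r}{\vv_r}_V = b(\uu-\uu_h,\vv_r)$ on $V_r$ and $b(\ww_h,\eeps_r)=0$ on $U_h$, combine to $a\bigl((\uu-\uu_h,-\eeps_r),(\ww_h,\vv_r)\bigr)=0$ for all $(\ww_h,\vv_r)\in U_{hp}^+\times V_r$, so any such pair may be subtracted.

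For the choice of approximants I take $\ww_h := (g,0,\gamma_{0,\cS}\Pi_\nabla^1 v,\gamma_{\normal,\cS}\Pi_\div^p\ttau) \in U_{hp}^+$, which is admissible because the scalar component is exact ($g\in\PP^{p+1}(\TT)$), and $\vv_r := (\Pi_\nabla^1 v,\Pi_\div^{p+1}\ttau) \in V_r$; note that the condition $r_2\geq p+2$ from~\eqref{eq:assVr} is essential to fit $\RT^{p+1}(\TT)\subset V_r$. Via the regularity shift~\eqref{eq:mixed:regshift}, the bounds~\eqref{eq:approx}, and Theorem~\ref{thm:Hm12est}, both $\|\ww-\ww_h\|_U$ and $\|\vv-\vv_r\|_V$ are $\lesssim h^{s'}\|g\|$. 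The delicate ingredient is the divergence part of the latter: by the commutativity~\eqref{eq:RTcomm}, $\|\div(\ttau-\Pi_\div^{p+1}\ttau)\| = \|(I-\Pi^{p+1})(g-v)\|$, and the $g$-contribution vanishes \emph{exactly} because $g\in\PP^{p+1}(\TT)$, leaving only $\lesssim h\|v\|_{H^1(\Omega)}\lesssim h\|g\|$. Combining with $\|\eeps_r\|_V\lesssim\|\uu-\uu_h\|_U$ (set $\vv_r=\eeps_r$ in~\eqref{eq:mixedDPG:a}) and continuity of $a$ yields $\|g\|^2 \lesssim h^{s'}\|\uu-\uu_h\|_U\|g\|$, as needed. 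The main obstacle is that the practical DPG is not Galerkin in the ideal test space $V$, so the short ideal-DPG identity $b(\uu-\uu_h,\Theta(\ww-\ww_h))$ is unavailable; carrying $\eeps_r$ along in the mixed form, and using $\vv=\Theta\ww$ to make the $\eeps_r$-terms cancel, is what restores a clean duality identity.
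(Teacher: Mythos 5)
Your proposal is correct and follows essentially the same route as the paper's proof: the orthogonal splitting via $\Pi^{p+1}$, the duality pair from Lemma~\ref{lem:w} with $\vv=\Theta\ww$ used to convert $b(\uu-\uu_h,\vv)$ into $a((\uu-\uu_h,-\eeps_r),(\ww,\vv))$, Galerkin orthogonality in the mixed form, the same approximants (the paper takes $\Pi_\div^0\ttau$ rather than $\Pi_\div^p\ttau$ in $\ww_h$, which is immaterial since $\ttau$ only has $H^{s'}$ regularity), and the same key cancellation $\div(1-\Pi_\div^{p+1})\ttau=-(1-\Pi^{p+1})v$ exploiting $g\in\PP^{p+1}(\TT)$. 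The only cosmetic difference is that you bound $\norm{\eeps_r}V\lesssim\norm{\uu-\uu_h}U$ directly from~\eqref{eq:mixedDPG:a}, whereas the paper cites an external result.
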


\begin{proof}
  We consider the splitting
  \begin{align*}
    \norm{u-u_h}{}^2 = \norm{(1-\Pi^{p+1})u}{}^2 + \norm{\Pi^{p+1}u-u_h}{}^2.
  \end{align*}
  The first term is estimated with the approximation properties~\eqref{eq:approx}, 
  \begin{align*}
    \norm{(1-\Pi^{p+1})u}{} \lesssim h^{1+s}\norm{u}{H^{1+s}(\Omega)}.
  \end{align*}
  To estimate the second term set $g:=\Pi^{p+1}u-u_h\in\PP^{p+1}(\Omega)$ and let $\vv=(v,\ttau)\in H_0^1(\Omega) \times
  \Hdivset\Omega\subseteq V$ denote the solution of the first-order system~\eqref{eq:mixed}. It holds that
  \begin{align*}
    \norm{g}{}^2 &= \ip{\Pi^{p+1}u-u_h}{g} = \ip{u-u_h}g = b(\uu-\uu_h,\vv).
  \end{align*}
  Using $\ww\in U$ with $\Theta\ww=\vv$
  from Lemma~\ref{lem:w} and the definition~\eqref{eq:ttt} of the trial-to-test operator, we infer
  $\ip{\vv}{(\mu,\llambda)}_V - b(\ww,(\mu,\llambda)) = 0$ for $(\mu,\llambda)\in V$.
  Then, with the bilinear form $a(\cdot,\cdot)$, we get
  \begin{align*}
    b(\uu-\uu_h,\vv) = b(\uu-\uu_h,\vv) + \ip{\eeps-\eeps_r}{\vv}_V - b(\ww,\eeps-\eeps_r) 
    = a( (\uu-\uu_h,\eeps-\eeps_r), (\ww,\vv)).
  \end{align*}
  Recall that $\eeps=0$ and that we have Galerkin orthogonality, hence,
  together with boundedness of $a(\cdot,\cdot)$ this gives for $(\ww_h,\vv_r)\in U_h\times V_r$,
  \begin{align*}
    \norm{g}{}^2 &= a( (\uu-\uu_h,\eeps-\eeps_r), (\ww-\ww_h,\vv-\vv_r)) \\
    &\lesssim \big( \norm{\uu-\uu_h}U + \norm{\eeps_r}V \big) \big(\norm{\ww-\ww_h}U + \norm{\vv-\vv_r}V\big)
    \lesssim \norm{\uu-\uu_h}U \big(\norm{\ww-\ww_h}U + \norm{\vv-\vv_r}V\big)
  \end{align*}
  We also used that the discrete error function $\eeps_r$ satisfies the bound $\norm{\eeps_r}V\lesssim
  \norm{\uu-\uu_h}U$. This follows from the existence of a Fortin operator, 
    which allows the use of~\cite[Theorem~2.1]{DPGaposteriori}.
  We choose $(\ww_h,\vv_r)$ as
  \begin{align*}
    \ww_h &:= (g,0,\gamma_{0,\cS}\Pi_\nabla^{1}v,\gamma_{\normal,\cS}\Pi_\div^0\ttau)\in U_h = U_{hp}^+, \\
    \vv_r &:= (\Pi_\nabla^{1}v,\Pi_\div^{p+1}\ttau)\in V_r.
  \end{align*}
  Observe that $\ww-\ww_h = (0,0,\gamma_{0,\cS}(1-\Pi_\nabla^1)v,\gamma_{\normal,\cS}(1-\Pi_\div^0)\ttau)$, thus,
  \begin{align*}
    \norm{\ww-\ww_h}U &\leq \norm{\gamma_{0,\cS}(1-\Pi_\nabla^1)v}{1/2,\cS} 
    + \norm{\gamma_{\normal,\cS}(1-\Pi_\div^0)\ttau}{-1/2,\cS}
    \\&\leq 
    \norm{(1-\Pi_\nabla^1)v}{H^1(\Omega)} + \norm{\gamma_{\normal,\cS}(1-\Pi_\div^0)\ttau}{-1/2,\cS}.
  \end{align*}
  We apply Theorem~\ref{thm:Hm12est} in conjunction with Lemma~\ref{lem:mixed} and~\eqref{eq:mixed:a} to see
  \begin{align*}
    \norm{(1-\Pi_\nabla^1)v}{H^1(\Omega)} + \norm{\gamma_{\normal,\cS}(1-\Pi_\div^0)\ttau)}{-1/2,\cS}
    &\lesssim h^{s'} \norm{g}{} + h\norm{\div\ttau}{} \\
    &\lesssim h^{s'} \norm{g}{} + h\norm{g}{} + h\norm{v}{} \lesssim  h^{s'}\norm{g}{}.
  \end{align*}
  It remains to estimate $\norm{\vv-\vv_r}V$. Again we use Lemma~\ref{lem:mixed} to get
  \begin{align*}
    \norm{\vv-\vv_r}V &\leq \norm{(1\!-\!\Pi_\nabla^{1})v}{H^1(\Omega)} 
    + \norm{(1\!-\!\Pi_\div^{p+1})\ttau}{} + \norm{\div(1\!-\!\Pi_\div^{p+1})\ttau}{}
    \lesssim h^{s'}\norm{g}{} + \norm{\div(1\!-\!\Pi_\div^{p+1})\ttau}{}.
  \end{align*}
  For the last term on the right-hand side observe that, since $g\in \PP^{p+1}(\TT)$ and~\eqref{eq:assVr},
  \begin{align*}
    \div(1-\Pi_\div^{p+1})\ttau = (1-\Pi^{p+1})\div\ttau = (1-\Pi^{p+1})(g-v) = -(1-\Pi^{p+1})v,
  \end{align*}
  Thus,
  \begin{align*}
    \norm{\div(1-\Pi_\div^{p+1})\ttau}{} \lesssim \norm{(1-\Pi^{p+1})v}{} \leq \norm{(1-\Pi^0)v}{} \lesssim
    h\norm{g}{}.
  \end{align*}
  Putting all estimates together we obtain
  \begin{align*}
    \norm{g}{}^2 \lesssim \norm{\uu-\uu_h}U \big(\norm{\ww-\ww_h}U + \norm{\vv-\vv_r}V\big) 
    \lesssim \norm{\uu-\uu_h}U h^{s'} \norm{g}{}.
  \end{align*}
  Dividing by $\norm{g}{}$ we further infer for the total error
  \begin{align*}
    \norm{u-u_h}{} \leq \norm{u-\Pi^{p+1}u}{} + \norm{g}{} 
    \lesssim h^{1+s}\norm{u}{H^{1+s}(\Omega)} + h^{s'} \norm{\uu-\uu_h}U.
  \end{align*}
  Finally, applying the a priori convergence result from Corollary~\ref{cor:apriori} to estimate the global error
  $\norm{\uu-\uu_h}U$ we finish the proof.
\end{proof}

The following result is immediate.
\begin{corollary}\label{cor:an}
  Besides the assumptions of Theorem~\ref{thm:an}, suppose additionally that $\Omega$ is
  convex and $u\in H^{p+2}(\Omega)$.
  Then,
  \begin{align*}
    \norm{u-u_h}{} \leq C h^{p+2} \norm{u}{H^{p+2}(\Omega)}.
  \end{align*}
  The constant $C>0$ depends only on $\Omega$, the constants from~\eqref{eq:approx}, and $C_\Pi$.
  \qed
\end{corollary}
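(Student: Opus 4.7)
The plan is simply to specialize Theorem~\ref{thm:an} to the setting of the corollary. The two hypotheses we need to unpack are (a) convexity of $\Omega$ and (b) the regularity $u \in H^{p+2}(\Omega)$.

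First, I would invoke Lemma~\ref{lem:mixed} to pin down the regularity shift $s'$: since $\Omega$ is convex, the dual problem~\eqref{eq:mixed} enjoys full $H^2$ regularity, which means we can take $s' = 1$. Next, since $u \in H^{p+2}(\Omega) = H^{1+(p+1)}(\Omega)$, we may apply Theorem~\ref{thm:an} with the exponent $s = p+1$. Note that $p+1 \ge 1$, so we are in the second branch of the case distinction, which gives the cleaner bound without the extra $h\norm{f}{}$ term.

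Plugging $s' = 1$ and $s = p+1$ into the conclusion of Theorem~\ref{thm:an} yields
\begin{align*}
  \norm{u-u_h}{} \leq C\, h^{s'} \cdot h^{s} \norm{u}{H^{1+s}(\Omega)} = C\, h^{p+2} \norm{u}{H^{p+2}(\Omega)},
\end{align*}
which is exactly the asserted bound. The constant $C$ inherits its dependence from Theorem~\ref{thm:an}, namely on $\Omega$, the constants in~\eqref{eq:approx}, and $C_\Pi$. There is no real obstacle here; the corollary is merely the statement that, under optimal regularity and in the best geometric setting, the $L^2$-rate for $u-u_h$ gains one full power of $h$ over the natural order-$(p+1)$ rate of the total error.
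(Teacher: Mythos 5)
Your proof is correct and is exactly the intended argument: the paper labels this corollary as ``immediate,'' and the immediate route is precisely the specialization you carry out, taking $s'=1$ from the convexity statement in Lemma~\ref{lem:mixed} and $s=p+1$ in the second branch of Theorem~\ref{thm:an}. Nothing further is needed.
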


\section{Postprocessing of solution and superconvergence}\label{sec:postproc}
In this section we introduce a postprocessing of the solution
$\uu_h = (u_h,\ssigma_h,\widehat u_h,\widehat\sigma_h)\in U_{h}:=U_{hp}$ of~\eqref{eq:practicalDPG}.
We define $\widetilde\uu_h = (\widetilde u_h,\ssigma_h,\widehat u_h,\widehat\sigma_h)\in U_{hp}^+$, where $\widetilde
u_h\in \PP^{p+1}(\TT)$ is determined on each element $\el\in\TT$ through
\begin{subequations}\label{eq:postproc}
\begin{align}\label{eq:postproc:neumann}
  \ip{\nabla\widetilde u_h}{\nabla v}_\el &= \ip{\ssigma_h}{\nabla v}_\el \quad\text{for all } v\in\PP^{p+1}(\el), \\
  \ip{\widetilde u_h}1_\el &= \ip{u_h}1_\el, \label{eq:postproc:im}
\end{align}
\end{subequations}
Note that these relations uniquely define $\widetilde u_h\in \PP^{p+1}(\TT)$.
Observe that~\eqref{eq:postproc:neumann} is the (discretized) weak formulation of the Neumann problem
\begin{align*}
  \Delta \widetilde u &= \div\ssigma_h \quad\text{in }\el, \\
  \frac{\partial \widetilde u}{\partial \normal_\el} &= \ssigma_h\cdot\normal_\el|_{\partial \el}.
\end{align*}

Let us note that the postprocessing technique~\eqref{eq:postproc} goes back to~\cite{stenbergPostPr}.
The following result is found in a similar form in~\cite{stenbergPostPr}, resp.~\cite[Theorem~4.1]{HDGeigen}.
Therefore, the proof is omitted and left to the reader.

\begin{lemma}\label{lem:postproc}
  Let $f\in L^2(\Omega)$ and 
  let $\uu = (u,\ssigma,\widehat u,\widehat\sigma)\in U$ be the solution of~\eqref{eq:uwf}.
  For $p\in\N_0$,
  let $\uu_h = (u_h,\ssigma_h,\widehat u_h,\widehat\sigma_h) \in U_{hp}$.
  Define $\widetilde u_h \in\PP^{p+1}(\TT)$ as in~\eqref{eq:postproc}.
  Suppose $u\in H^{1+s}(\Omega)$ for some $s\in (1/2,p+1]$.
  Then,
  \begin{align*}
    \norm{u-\widetilde u_h}{} \leq C h \big(\norm{\uu-\uu_h}{U} + h^s\norm{u}{H^{1+s}(\Omega)}\big) + \norm{\Pi^0 (u-u_h)}{}.
  \end{align*}
  The constant $C>0$ depends only on the constants from~\eqref{eq:approx}.\qed
\end{lemma}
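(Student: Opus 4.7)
The plan is to introduce an auxiliary elementwise postprocessing $\widetilde u$ of the \emph{exact} solution $u$ and split
\begin{align*}
\norm{u-\widetilde u_h}{} \leq \norm{u-\widetilde u}{} + \norm{\widetilde u-\widetilde u_h}{},
\end{align*}
so that the first term captures how well the postprocessing scheme can approximate $u$ in principle, while the second measures the perturbation caused by replacing $(u,\ssigma)$ by the discrete data $(u_h,\ssigma_h)$.

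First I would define $\widetilde u$ elementwise by the continuous analogue of~\eqref{eq:postproc}: for each $T\in\TT$, $\widetilde u|_T\in\PP^{p+1}(T)$ is determined by
\begin{align*}
\ip{\nabla\widetilde u}{\nabla v}_T = \ip{\nabla u}{\nabla v}_T\ \ \forall v\in\PP^{p+1}(T), \qquad \ip{\widetilde u}{1}_T = \ip{u}{1}_T.
\end{align*}
This is the local elliptic projection with a mean-value constraint. Standard Bramble--Hilbert/Deny--Lions arguments applied to the $H^1$-seminorm, combined with Poincar\'e's inequality on $T$ to pass from the seminorm to the $L^2$-norm, yield $\norm{u-\widetilde u}{T}\lesssim h_T^{1+s}\snorm{u}{H^{1+s}(T)}$ for $s\in(1/2,p+1]$. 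Summing over $T\in\TT$ produces $\norm{u-\widetilde u}{}\lesssim h\cdot h^s\norm{u}{H^{1+s}(\Omega)}$.

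For the perturbation term, set $e := \widetilde u - \widetilde u_h\in\PP^{p+1}(\TT)$. Subtracting~\eqref{eq:postproc:neumann} from its continuous counterpart and using $\nabla u = \ssigma$ gives, on every $T\in\TT$,
\begin{align*}
\ip{\nabla e}{\nabla v}_T = \ip{\ssigma-\ssigma_h}{\nabla v}_T\ \ \forall v\in\PP^{p+1}(T), \qquad \ip{e}{1}_T = \ip{u-u_h}{1}_T.
\end{align*}
Decomposing $e = (e-\bar e)+\bar e$ with $\bar e := |T|^{-1}\ip{e}{1}_T$ and testing with $v = e-\bar e$ (which has zero mean) yields $\norm{\nabla e}{T}\leq \norm{\ssigma-\ssigma_h}{T}$. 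A scaled Poincar\'e inequality on $T$ then converts this into $\norm{e-\bar e}{T}\lesssim h_T\norm{\ssigma-\ssigma_h}{T}$, while the second constraint identifies $\bar e$ (viewed as a constant on $T$) with $\Pi^0(u-u_h)|_T$, giving $\norm{\bar e}{T} = \norm{\Pi^0(u-u_h)}{T}$. Summing over $T$ and combining with the first bound yields
\begin{align*}
\norm{\widetilde u-\widetilde u_h}{}\lesssim h\norm{\ssigma-\ssigma_h}{} + \norm{\Pi^0(u-u_h)}{}\leq h\norm{\uu-\uu_h}{U} + \norm{\Pi^0(u-u_h)}{},
\end{align*}
and the claim follows.

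The argument is largely routine once the auxiliary $\widetilde u$ is in place. The only observation that really requires attention is the identification of $\bar e$ with $\Pi^0(u-u_h)|_T$ coming from~\eqref{eq:postproc:im}; this is precisely what allows the lowest-frequency component of $u-u_h$ to enter the bound \emph{without} picking up an extra factor of $h$, and it is what makes the estimate sharp enough to later yield superconvergence. No regularity or duality subtleties arise, so I do not expect any genuine obstacle.
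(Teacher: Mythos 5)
Your proof is correct and is essentially the standard Stenberg-type postprocessing argument that the paper itself omits, deferring to \cite{stenbergPostPr} and \cite[Theorem~4.1]{HDGeigen}: an elementwise local Neumann projection of the exact solution for the consistency part, and a Poincar\'e estimate on the zero-mean component plus the identification of the elementwise mean of the error with $\Pi^0(u-u_h)$ for the perturbation part. All steps check out (in particular $\norm{\nabla(\widetilde u-\widetilde u_h)}{\el}\leq\norm{\ssigma-\ssigma_h}{\el}$ and $\norm{\ssigma-\ssigma_h}{}\leq\norm{\uu-\uu_h}U$), so this faithfully fills the gap the paper leaves to the reader.
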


\begin{theorem}\label{thm:postproc}
  Let $f\in L^2(\Omega)$ and let $\uu = (u,\ssigma,\widehat u,\widehat \sigma)\in U$ denote the solution
  of~\eqref{eq:uwf}. 
  Let $p\in\N_0$ and suppose~\eqref{eq:fortin} holds with $U_h := U_{hp}$. 
  Let $\uu_h = (u_h,\ssigma_h,\widehat u_h,\widehat\sigma_h) \in U_h$ denote the solution of~\eqref{eq:practicalDPG}. 
  Suppose $u\in H^{1+s}(\Omega)$ for some $s\in (1/2,p+1]$.
  Define $\widetilde u_h \in\PP^{p+1}(\TT)$ as in~\eqref{eq:postproc}.
  Moreover, set $g:=\Pi^{0}(u-u_h)\in \PP^{0}(\TT)$ and let $s'\in(1/2,1]$ denote the regularity
  shift~\eqref{eq:mixed:regshift} of the 
  solution $(v,\ttau)$ of~\eqref{eq:mixed}. Then,
  \begin{align}\label{eq:postproc:convRate}
    \norm{u-\widetilde u_h}{} \leq C h^{s'} 
    \begin{cases}
      h^s\norm{u}{H^{1+s}(\Omega)} + h \norm{f}{} &\text{if } s\in(1/2,1), \\
      h^s\norm{u}{H^{1+s}(\Omega)} &\text{if } s = [1,p+1].
    \end{cases}
  \end{align}
  The constant $C>0$ depends only on $\Omega$, the constants from~\eqref{eq:approx}, and $C_\Pi$.

  In particular, if $\Omega$ is convex and $u\in H^{p+2}(\Omega)$, it holds that
  \begin{align}\label{eq:postproc:convex}
    \norm{u-\widetilde u_h}{} \leq C h^{p+2} \norm{u}{H^{p+2}(\Omega)}.
  \end{align}
\end{theorem}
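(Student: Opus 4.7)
The strategy is to start from Lemma~\ref{lem:postproc}, which decomposes
$$\norm{u-\widetilde u_h}{} \leq C h\bigl(\norm{\uu-\uu_h}U + h^s\norm{u}{H^{1+s}(\Omega)}\bigr) + \norm{\Pi^0(u-u_h)}{}.$$
The first term is controlled directly via the a priori bound from Corollary~\ref{cor:apriori}, contributing an $h\cdot h^s$ factor that is already of the desired order. The whole improvement must therefore come from showing that $\norm{\Pi^0(u-u_h)}{}$ converges at the enhanced rate $h^{s'}\norm{\uu-\uu_h}U$, which requires a duality argument analogous to (but simpler than) the one used in the proof of Theorem~\ref{thm:an}.

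Setting $g := \Pi^0(u-u_h)\in \PP^0(\TT)$, orthogonality of $\Pi^0$ gives $\norm{g}{}^2 = \ip{u-u_h}{g}$, and Lemma~\ref{lem:w} rewrites this as $b(\uu-\uu_h,\vv)$, where $\vv = (v,\ttau)\in H_0^1(\Omega)\times\Hdivset\Omega$ solves~\eqref{eq:mixed} with right-hand side $g$, and the lifted element $\ww := (g,0,\gamma_{0,\cS}v,\gamma_{\normal,\cS}\ttau)\in U$ satisfies $\Theta\ww = \vv$. Passing through the mixed formulation~\eqref{eq:mixedDPG} and exploiting Galerkin orthogonality, this becomes
$$\norm{g}{}^2 = a\bigl((\uu-\uu_h,\eeps-\eeps_r),(\ww-\ww_h,\vv-\vv_r)\bigr)\lesssim \norm{\uu-\uu_h}U\bigl(\norm{\ww-\ww_h}U + \norm{\vv-\vv_r}V\bigr)$$
for arbitrary $(\ww_h,\vv_r)\in U_h\times V_r$, where $\norm{\eeps_r}V \lesssim \norm{\uu-\uu_h}U$ is a consequence of the Fortin assumption.

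The main obstacle is that the trial space here is only $U_{hp}$, not the augmented $U_{hp}^+$ used in Theorem~\ref{thm:an}. Fortunately, because $g\in\PP^0(\TT)\subseteq\PP^p(\TT)$, the natural choice $\ww_h := (g,0,\gamma_{0,\cS}\Pi_\nabla^1 v,\gamma_{\normal,\cS}\Pi_\div^0\ttau)$ already lies in $U_{hp}$, and one picks $\vv_r := (\Pi_\nabla^1 v,\Pi_\div^{p+1}\ttau)\in V_r$. Then $\norm{\ww-\ww_h}U$ is bounded by $h^{s'}\norm{g}{}$ using Theorem~\ref{thm:Hm12est}, Lemma~\ref{lem:mixed}, and $\div\ttau = g-v$; while $\norm{\vv-\vv_r}V$ is handled similarly, with the one nontrivial step being $\div(1-\Pi_\div^{p+1})\ttau = (1-\Pi^{p+1})(g-v) = -(1-\Pi^{p+1})v$ (the $g$-contribution vanishes because $g\in\PP^0(\TT)\subseteq\PP^{p+1}(\TT)$), which is bounded by $h\norm{g}{}$ via~\eqref{eq:approx:b} and the stability estimate from Lemma~\ref{lem:mixed}. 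Dividing by $\norm{g}{}$ yields $\norm{\Pi^0(u-u_h)}{}\lesssim h^{s'}\norm{\uu-\uu_h}U$. Plugging this back into Lemma~\ref{lem:postproc}, invoking Corollary~\ref{cor:apriori} to bound $\norm{\uu-\uu_h}U$, and observing that $h\leq h^{s'}$ absorbs both the $h\norm{\uu-\uu_h}U$ and the $h^{1+s}\norm{u}{H^{1+s}(\Omega)}$ contributions, produces~\eqref{eq:postproc:convRate}. The convex case~\eqref{eq:postproc:convex} then specializes with $s'=1$ and $s=p+1$.
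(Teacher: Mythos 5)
Your proposal is correct and follows essentially the same route as the paper: Lemma~\ref{lem:postproc} plus a duality argument for $g=\Pi^0(u-u_h)$ via Lemma~\ref{lem:w}, Galerkin orthogonality, and the test pair $\ww_h=(g,0,\gamma_{0,\cS}\Pi_\nabla^1 v,\gamma_{\normal,\cS}\Pi_\div^0\ttau)$. The only (immaterial) deviation is that you take $\vv_r=(\Pi_\nabla^1 v,\Pi_\div^{p+1}\ttau)$ where the paper uses $\Pi_\div^0\ttau$; both choices lie in $V_r$ and yield the same bound since $g\in\PP^0(\TT)$.
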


\begin{proof}
  Set $g:=\Pi^0(u-u_h)$.
  Lemma~\ref{lem:postproc} shows
  \begin{align*}
    \norm{u-\widetilde u_h}{} \lesssim h \norm{\uu-\uu_h}U + h^{1+s}\norm{u}{H^{1+s}(\Omega)} + \norm{g}{}.
  \end{align*}
  The term $\norm{\uu-\uu_h}U$ can be estimated with Corollary~\ref{cor:apriori}. It thus remains to take care of
  the term $\norm{g}{}$ only.
  Let $\vv=(v,\ttau)$ be the solution of~\eqref{eq:mixed}. Then,
  \begin{align*}
    \norm{g}{}^2 &= \ip{\Pi^0(u-u_h)}{\Pi^0(u-u_h)} = \ip{u-u_h}{\Pi^0(u-u_h)} = b(\uu-\uu_h,\vv).
  \end{align*}
  We proceed as in the proof of Theorem~\ref{thm:an}. 
  With $\ww = (g,0,\gamma_{0,\cS}v,\gamma_{\normal,\cS}\ttau)$ from Lemma~\ref{lem:w} and
  the bilinear form $a(\cdot,\cdot)$, we get, using
  Galerkin orthogonality, $\eeps=0$, and the bound $\norm{\eeps_r}V\lesssim \norm{\uu-\uu_h}U$
  (see~\cite[Theorem~2.1]{DPGaposteriori}), the estimate
  \begin{align*}
    b(\uu-\uu_h,\vv) &= a( (\uu-\uu_h,\eeps-\eeps_r),(\ww,\vv)) = a( (\uu-\uu_h,\eeps-\eeps_r),(\ww-\ww_h,\vv-\vv_r))
    \\
    &\lesssim \norm{\uu-\uu_h}U \big( \norm{\ww-\ww_h}U + \norm{\vv-\vv_r}V \big)
    \quad\text{for }(\ww_h,\vv_r) \in U_h\times V_r.
  \end{align*}
  We choose $\ww_h = (g,0,\gamma_{0,\cS}\Pi_\nabla^1 v,\gamma_{\normal,\cS}\Pi_\div^0\ttau)\in U_h = U_{hp}$ and
  $\vv_r = (\Pi_\nabla^1 v,\Pi_\div^0\ttau)\in V_r$. 
  Observe that $\ww-\ww_h = (0,0,\gamma_{0,\cS}(v-\Pi_\nabla^1 v),\gamma_{\normal,\cS}(\ttau-\Pi_\div^0\ttau))$. 
  With the latter choices we follow the same lines as in the proof of Theorem~\ref{thm:an} to see that
  \begin{align*}
    \norm{\ww-\ww_h}U + \norm{\vv-\vv_r}V \lesssim h^{s'}\norm{g}{}.
  \end{align*}
  Therefore, the last estimates show
  \begin{align*}
    \norm{g}{}^2 \lesssim h^{s'}\norm{\uu-\uu_h}U\norm{g}{}, \quad\text{hence},\quad
    \norm{g}{} \lesssim h^{s'}\norm{\uu-\uu_h}U.
  \end{align*}
  Applying Corollary~\ref{cor:apriori} to estimate $\norm{\uu-\uu_h}U$ shows~\eqref{eq:postproc:convRate}.
  If $\Omega$ is convex, then $s'=1$ and~\eqref{eq:postproc:convex} follows.
\end{proof}

\begin{figure}
  \begin{center}
    \includegraphics{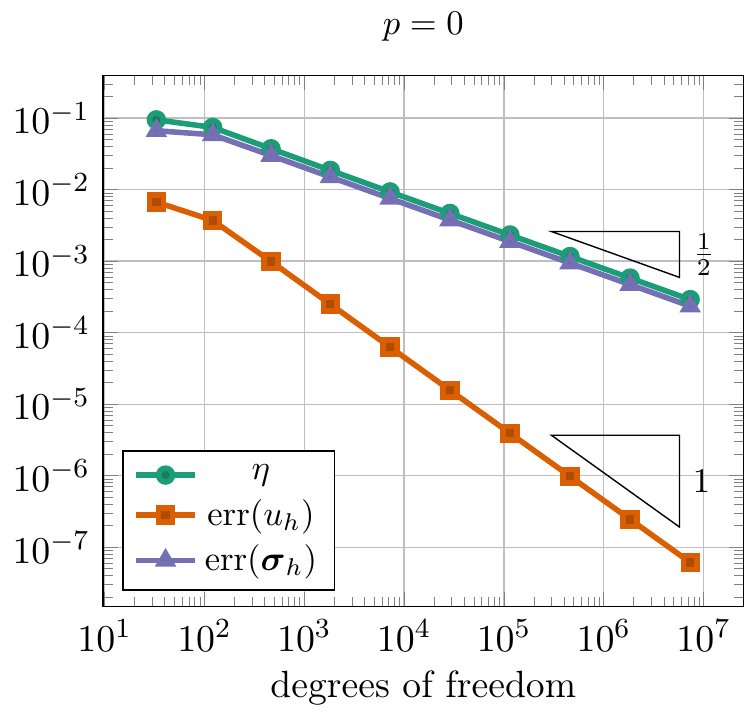}
    \includegraphics{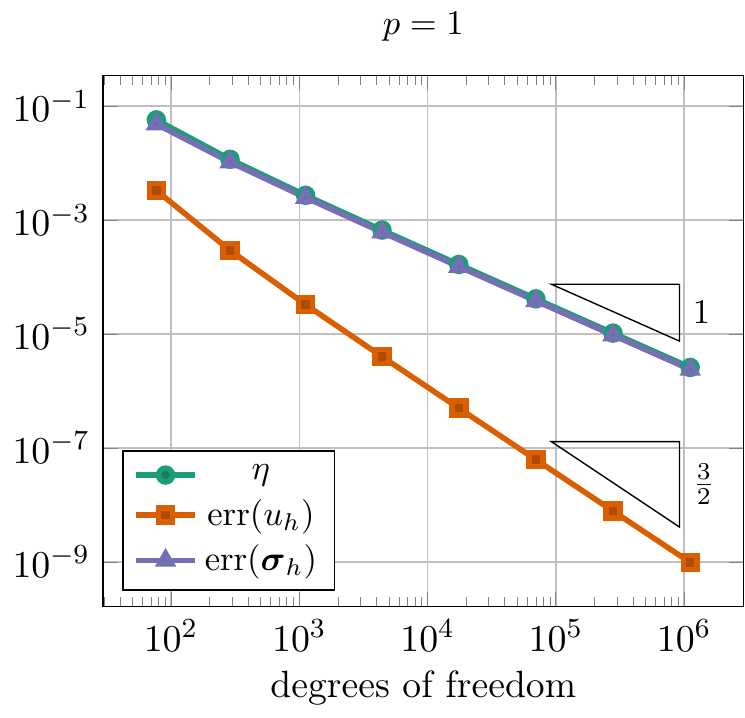}
    \includegraphics{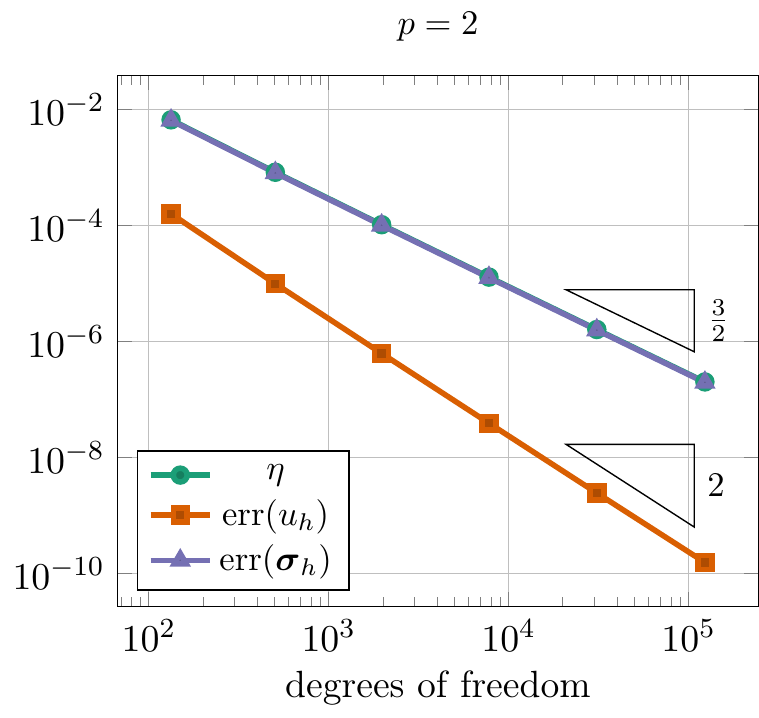}
    \includegraphics{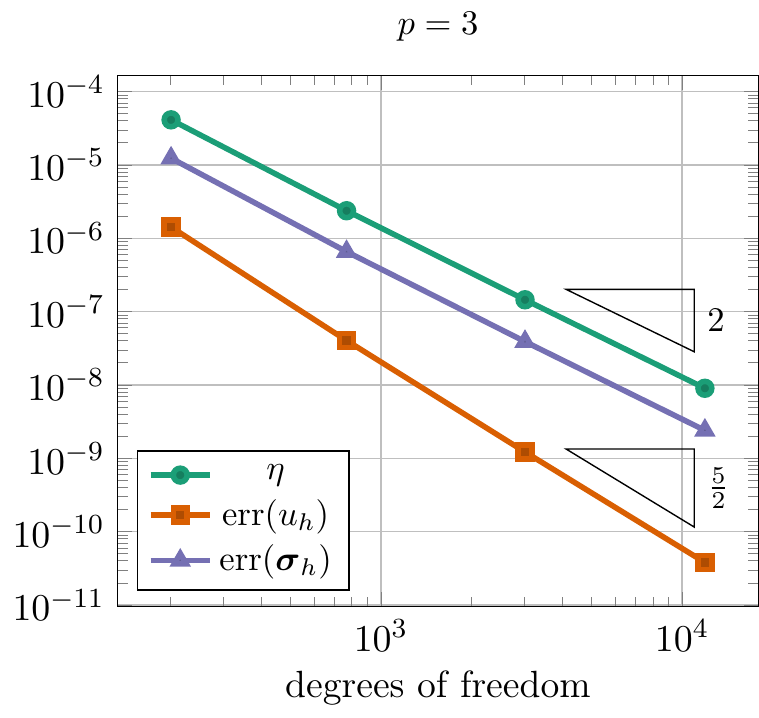}
  \end{center}
  \caption{Error plots for the solutions $\uu_h \in U_{hp}^+$ ($p=0,1,2,3$) in the augmented
  trial spaces (Section~\ref{sec:an}) for the example from Section~\ref{sec:ex:convex}.}
  \label{fig:convex:an}
\end{figure}

\begin{figure}
  \begin{center}
    \includegraphics{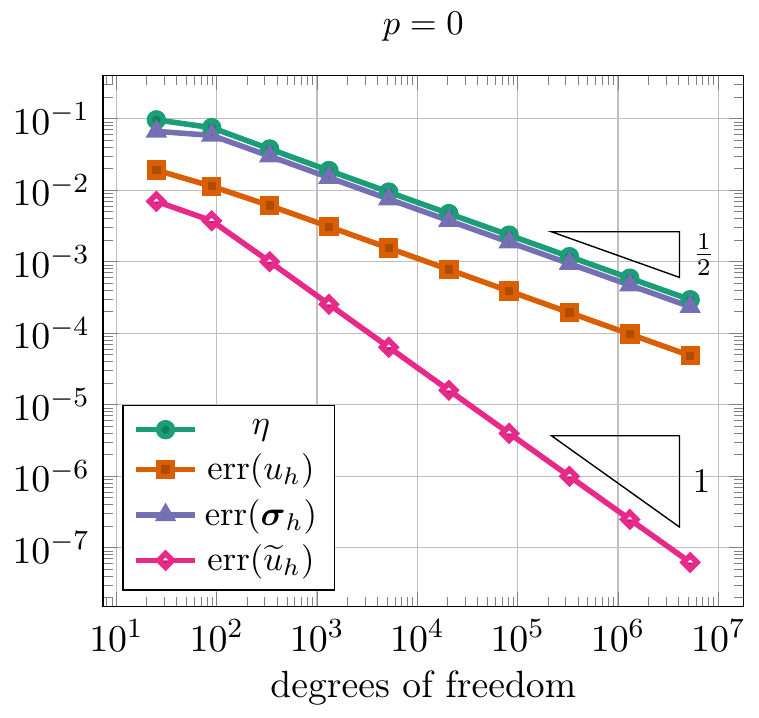}
    \includegraphics{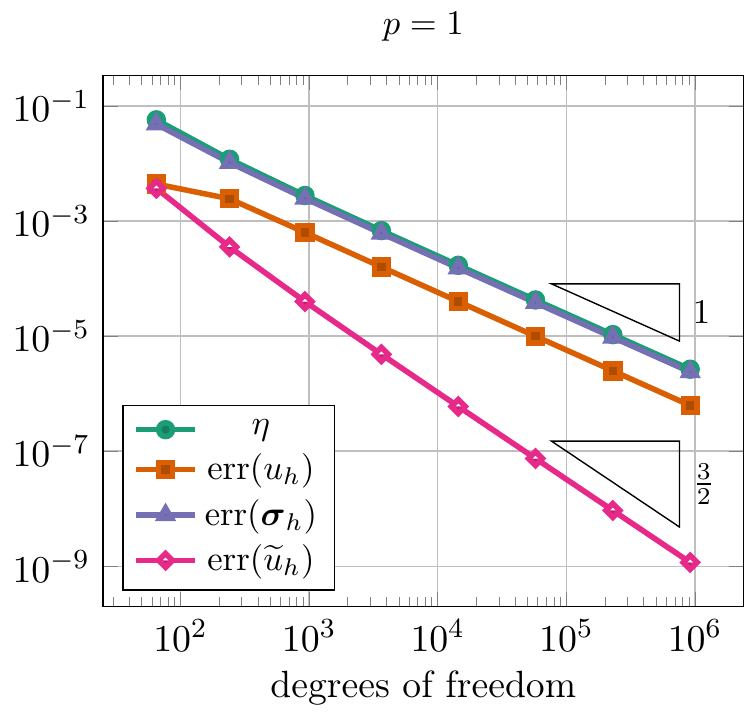}
    \includegraphics{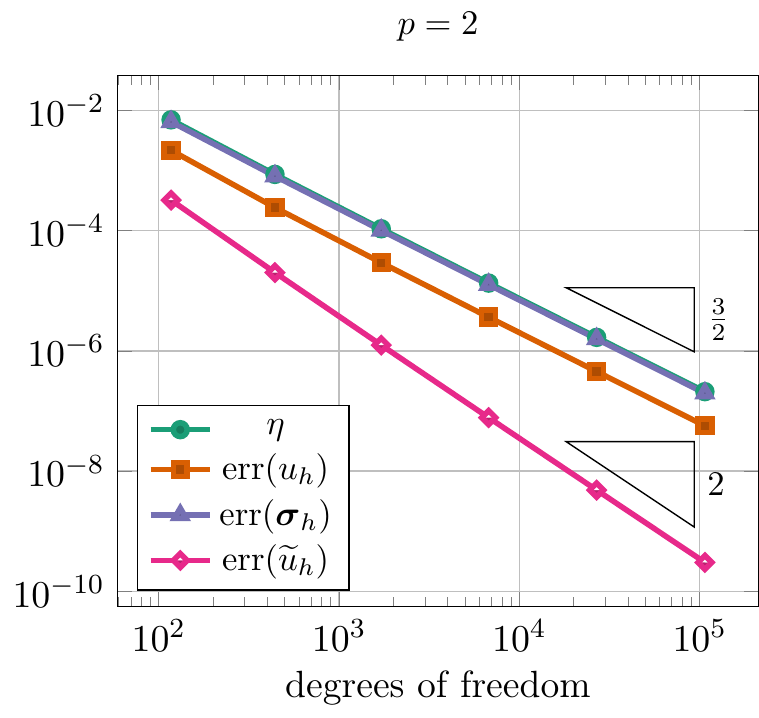}
    \includegraphics{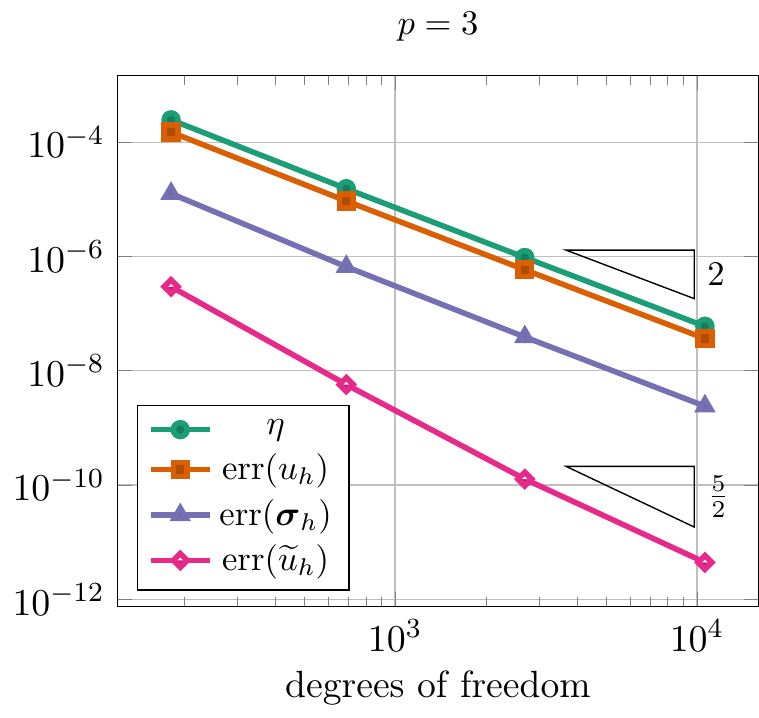}
  \end{center}
  \caption{Error plots for the postprocessed solution (Section~\ref{sec:postproc}) for $p=0,1,2,3$
  for the example from Section~\ref{sec:ex:convex}.}
  \label{fig:convex:postproc}
\end{figure}

\begin{figure}
  \begin{center}
    \includegraphics{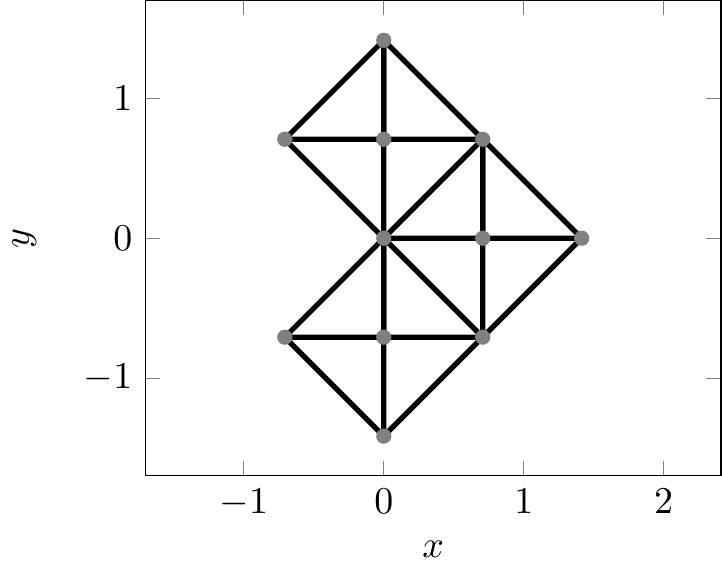}
  \end{center}
  \caption{L-shaped domain with initial triangulation (Section~\ref{sec:ex:nonconvex}).}
  \label{fig:Lshape}
\end{figure}

\begin{figure}
  \begin{center}
    \includegraphics{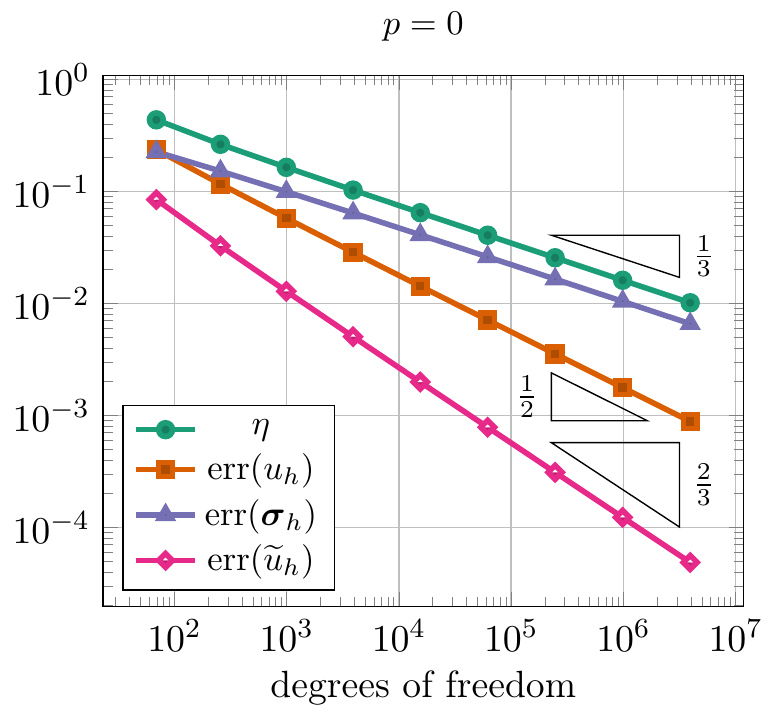}
    \includegraphics{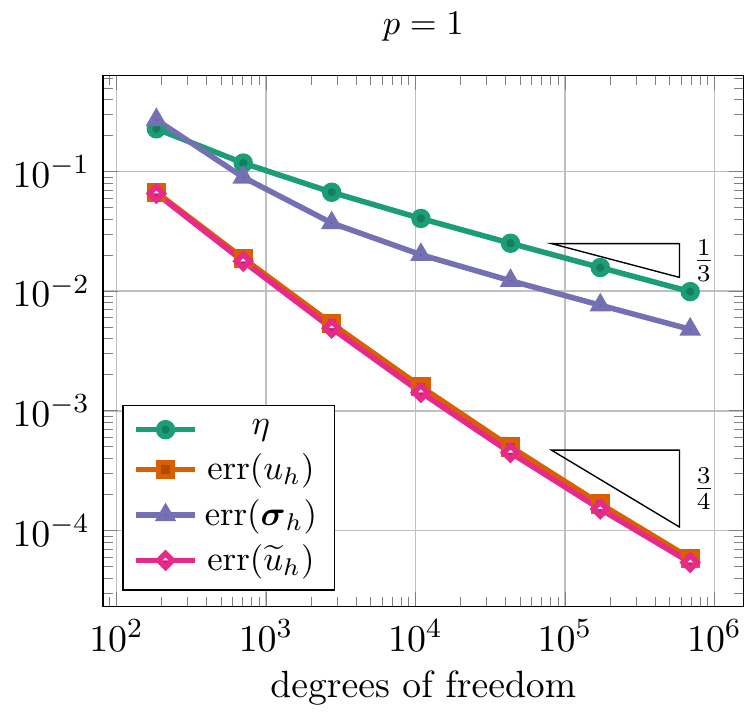}
    \includegraphics{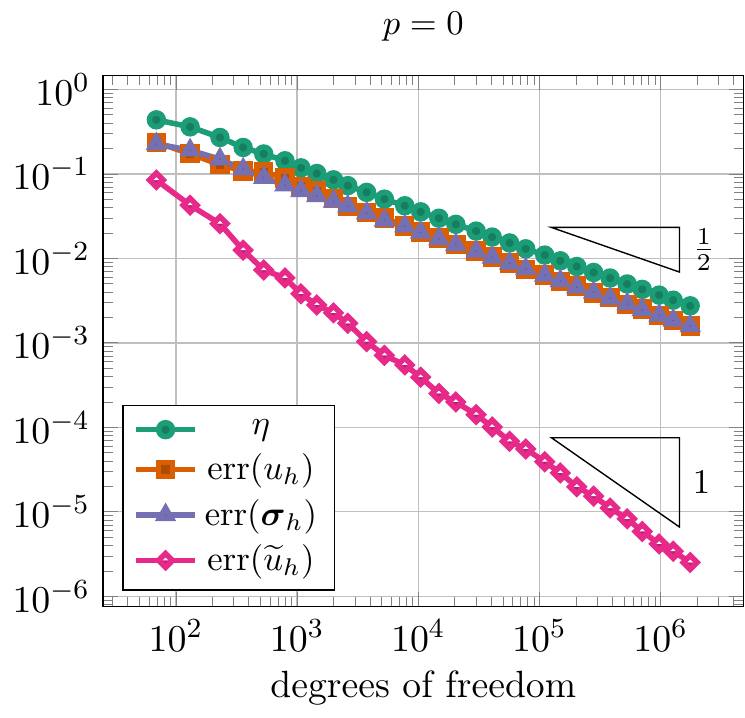}
    \includegraphics{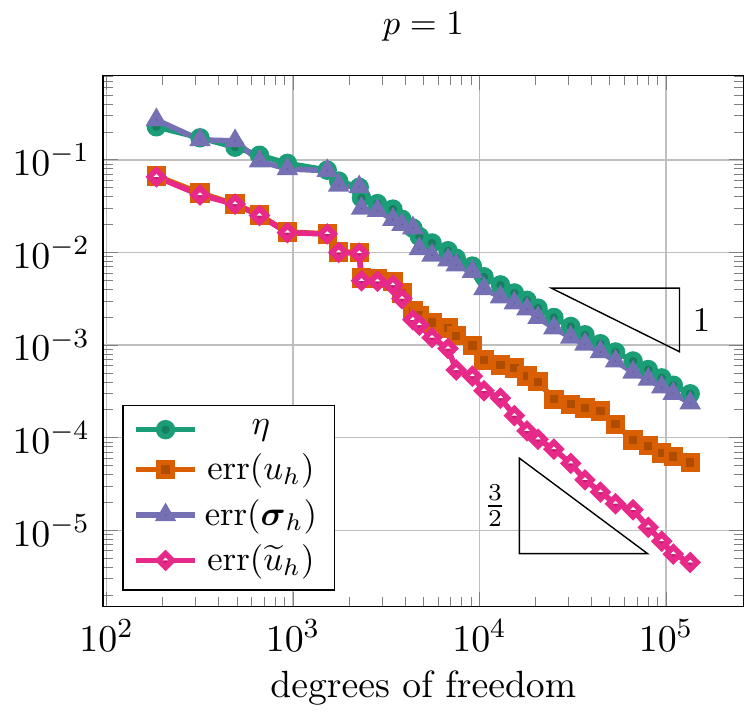}
  \end{center}
  \caption{Error plots for the postprocessed solution (Section~\ref{sec:postproc}) with $p=0$ (left) and $p=1$ (right)
    for the example from Section~\ref{sec:ex:nonconvex}.
    The first row corresponds to uniform refinements, the second to adaptive refinements.}
  \label{fig:nonconvex:postproc}
\end{figure}

\section{Numerical experiments}\label{sec:ex}
In this section we present some numerical examples with a convex domain (Section~\ref{sec:ex:convex}) 
for $p=0,1,2,3$ and nonconvex domain (Section~\ref{sec:ex:nonconvex}) for $p=0,1$.
In Section~\ref{sec:ex:convex} we consider the reaction-diffusion problem~\eqref{eq:model} and in
Section~\ref{sec:ex:nonconvex} we consider the Poisson equation.

Throughout, for $p\in\N_0$, we consider an enrichment $\Delta p=2$ for the test space, i.e., we use 
\begin{align*}
  V_r := \PP^{p+2}(\TT) \times \PP^{p+2}(\TT)^2.
\end{align*}
Let us note that by~\cite{practicalDPG} we would have to use a higher enrichment $\Delta p$ for the reaction-diffusion
problem to ensure the existence of a Fortin operator, in particular if we seek the solution $\uu_h$ in the augmented test
space $U_{hp}^+$, see also Remark~\ref{rem:testSpace}.
However, our experiments indicate that the enrichment $\Delta p=2$ is stable, meaning that
there is no significant difference in the results (concerning convergence rates) if $\Delta p >2$.
We refer to~\cite{BoumaGH_DPGconvRates}, where the topic of reduced test spaces was addressed for a primal DPG method.

In all the experiments we plot error quantities with respect to the degrees of freedom $D_h := \dim(U_h)$ in the trial
space.
Furthermore, experimental convergence rates are indicated with triangles in the plots. 
The number $\alpha$ next to the triangle is its negative slope.
Note that for uniform refinements it holds that $D_h^{-1} \simeq h^2$. 
Thus, a slope $\alpha$ corresponds to a straight line parallel to $D_h^{-\alpha} \simeq
h^{2\alpha}$.
Let $\uu=(u,\ssigma,\widehat u,\widehat\sigma)\in U$ be the exact solution of~\eqref{eq:uwf}.
We define the error quantities
\begin{align*}
  \err(v) := \norm{u-v}{}, \quad \err(\ttau) := \norm{\ssigma-\ttau}{} \quad\text{for }v\in L^2(\Omega), \ttau\in
  L^2(\Omega)^2.
\end{align*}
Moreover, for $(\uu_h,\eeps_r)\in U_h\times V_r$ being the solution of the first-order system~\eqref{eq:mixedDPG}, 
let $\eta := \eta(\uu_h) := \norm{\eeps_r}V$ denote the DPG error estimator. We refer the interested reader
to~\cite{DPGaposteriori} for more information on adaptivity for DPG methods.
We will use this localizable estimator to steer an adaptive algorithm in Section~\ref{sec:ex:nonconvex}.

\subsection{Convex domain}\label{sec:ex:convex}
Let $\Omega = (0,1)^2$. We prescribe the exact solution
\begin{align*}
  u(x,y) = x(1-x)y(1-y) \in H_0^1(\Omega).
\end{align*}
Note that $u$ is smooth, thus, $u\in H^{1+s}(\Omega)$ for all $s>1/2$. Moreover, since $\Omega$ is convex, the
regularity shift~\eqref{eq:mixed:regshift} satisfies $s'=1$.
Therefore, we expect a convergence behavior $\err(u_h) = \OO(h^{p+2})$, if $\uu_h\in U_{hp}^+$ is the solution
in the augmented trial space as well as $\err(\widetilde u_h) = \OO(h^{p+2})$ for the postprocessed
solution~\eqref{eq:postproc}, where $\uu_h\in U_{hp}$.

Figure~\ref{fig:convex:an} shows convergence plots for solutions in the augmented space $U_{hp}^+$ for $p=0,1,2,3$.
We observe the optimal rates as predicted by Corollary~\ref{cor:an} (convex domain).

Figure~\ref{fig:convex:postproc} shows convergence plots for the postprocessed solution $\widetilde\uu_h$
(Section~\ref{sec:postproc}) for $p=0,1,2,3$.
Again we observe optimal convergence as predicted by Theorem~\ref{thm:postproc}.

\subsection{Nonconvex domain}\label{sec:ex:nonconvex}
We consider the L-shaped domain visualized in Figure~\ref{fig:Lshape} and the Poisson equation
\begin{alignat*}{2}
  -\Delta u &= f &\quad&\text{in }\Omega, \\
  u &= u_\Gamma &\quad&\text{on }\Gamma.
\end{alignat*}
The ultra-weak bilinear form is given by (see, e.g.,~\cite{DemkowiczG_11_ADM})
\begin{align*}
  b(\uu,\vv) = \ip{u}{\pwdiv\ttau} + \ip{\ssigma}{\pwnabla v + \ttau} -\dual{\widehat u}{\ttau\cdot\normal}_\cS 
  -\dual{\widehat\sigma}v_\cS.
\end{align*}
We incorporate the inhomogeneous Dirichlet data as described in~\cite{DPGoverview}.
In this section we only discuss results for the postprocessed solutions. The results for solutions in the augmented space
$U_{hp}^+$ are similar.

We prescribe the solution
\begin{align*}
  u(r,\varphi) = r^{2/3}\cos(2/3\varphi),
\end{align*}
where $(r,\varphi)$ denote the polar coordinates of $(x,y)\in\Omega$. Then, $f:=-\Delta u = 0$.
It is a straightforward calculation to check that $u\in H^{1+2/3-\eps}(\Omega)$ for all $\eps>0$.
In view of the a priori result from Corollary~\ref{cor:apriori} 
we expect a (numerical) convergence behavior $\norm{\uu-\uu_h}U = \OO(h^{2/3}) = \OO(D_h^{-1/3})$ for the overall error
and uniform mesh-refinement ($p=0$ and $p=1$).
This perfectly fits to the plots in the first row of Figure~\ref{fig:nonconvex:postproc}.
We note that $\err(u_h) = \norm{u-u_h}{}$ has a higher convergence rate than the overall error,
i.e., this error behaves like $h^1$ for $p=0$ and $h^{3/2}$ for $p=1$.
If we take a look at the convergence order of $\err(\widetilde u_h)$ we see that these rates are far from optimal. In
particular, for $p=0$ we get $\err(\widetilde u_h) = \OO(h^{4/3}) = \OO(D_h^{-2/3})$. 
From Theorem~\ref{thm:postproc} we learned that the error of the postprocessed solution depends 
on the regularity of the solution and the regularity of some dual problem.
For the particular domain configuration we stress that one can not expect higher regularity than
$H^{1+2/3-\eps}(\Omega)$ for the dual problem and the problem itself.
This is also observed in the upper left plot in Figure~\ref{fig:nonconvex:postproc}. 
Moreover, this also shows that the bounds in Theorem~\ref{thm:postproc} can, in general, not be sharpened.
For $p=1$ (upper right plot in Figure~\ref{fig:nonconvex:postproc}) we get a slightly better behavior of $h^{3/2}$
compared to $h^{4/3}$.

Finally, since adaptivity is quite a ``natural'' concept in the context of DPG methods, see~\cite{DPGaposteriori},
we consider also an adaptive algorithm (with local mesh refining)
of the type
\begin{align*}
  \boxed{\texttt{SOLVE}} \to \boxed{\texttt{ESTIMATE}} \to \boxed{\texttt{MARK}}
  \to\boxed{\texttt{REFINE}}.
\end{align*}
The marking step is realized using the bulk criterion: Find a set (of minimal cardinality) $\MM\subseteq \TT$ such that
\begin{align*}
  \theta \eta^2 \leq \sum_{T\in\MM} \eta(T)^2,
\end{align*}
where $\theta\in(0,1)$, and $\eta(T)$ are the local contributions of the DPG error estimator $\eta$, i.e.,
\begin{align*}
  \eta^2 = \sum_{T\in\TT}\eta(T)^2.
\end{align*}
Throughout, we choose $\theta=\tfrac14$.
As refinement rule we use newest-vertex bisection, 
see, e.g.,~\cite{stevenson:NVB} for details on the refinement strategy.

For this problem we observe that the optimal rates (with respect to $D_h$) for the overall error
and for the postprocessed solution are recovered, i.e., $\norm{\uu-\uu_h}U = \OO(D_h^{-(p+1)/2})$ and 
$\norm{u-\widetilde u_h}{} = \OO(D_h^{-(p+2)/2})$, see 
second row of plots in Figure~\ref{fig:nonconvex:postproc}.

\bibliographystyle{abbrv}
\bibliography{literature}

\end{document}